\documentclass[twoside, 11pt]{article}
\usepackage[english]{babel}
\usepackage[latin1]{inputenc}
\usepackage{amsmath,amssymb}
\usepackage{amsfonts}
\usepackage{latexsym}
\usepackage{graphics,graphicx,psfrag}
\usepackage{upref}
\usepackage{hyperref}
\usepackage{caption,accents, float}
\usepackage{cite}
\usepackage[dvipsnames]{color, xcolor}
\usepackage{changes}

\hbadness10000
\parindent0cm
\sloppy
\textwidth16.7cm
\textheight23.8cm
\oddsidemargin0cm
\evensidemargin0cm
\topmargin-2cm
\allowdisplaybreaks[1]

\numberwithin{equation}{section}

\newcommand{\R}{\mathbb{R}}

\newcommand{\supp}{\mathop{\mathrm{supp}\,}\nolimits}

\newcommand{\ds}{\displaystyle}
\newcommand{\real}{\mathbb{R}}
\newcommand{\rn}{\mathbb{R}^n}
\newcommand{\nat}{\mathbb{N}}

\newcommand{\rd}{\mathbb{R}^d}
\newcommand{\ud}{\mathrm{d}}

\newtheorem{lemma}{Lemma}[section]

\newtheorem{theorem}[lemma]{Theorem}
\newtheorem{corollary}[lemma]{Corollary}
\newtheorem{definition}[lemma]{Definition}

\newtheorem{rem}[lemma]{Remark}
\newcommand{\remark}[1]{\begin{rem}{\upshape #1}\end{rem}}
\newtheorem{example}[lemma]{Example}

\newcommand{\proofstart}{\mbox{P\,r\,o\,o\,f\, :\quad}}
\newcommand{\proofend}{\nopagebreak\hfill\raisebox{0.3em}{\fbox{}}\\}
\newenvironment{proof}{\proofstart}{\proofend}

\newcommand{\domain}{\mathcal{O}}

\newcommand{\bit}{\begin{itemize}}
\newcommand{\eit}{\end{itemize}}
\newcommand{\beq}{\begin{equation}}
\newcommand{\eeq}{\end{equation}}

\newcommand{\bfa}{\mathbf{a}}
\newcommand{\bfb}{\mathbf{b}}
\newcommand{\bfr}{\mathbf{r}}
\newcommand{\bfs}{\mathbf{s}}
\newcommand{\bfm}{\mathbf{m}}
\newcommand{\bfl}{\boldsymbol\ell}
\newcommand{\bfalpha}{\boldsymbol{\alpha}}

\title{Anisotropic Besov regularity of parabolic PDEs}

\author{
Stephan Dahlke\footnote{Philipps-University Marburg,  FB12 Mathematics and Computer Science, Hans-Meerwein Stra\ss{}e, Lahnberge, 35032 Marburg, Germany. Email: \href{mailto:dahlke@mathematik.uni-marburg.de}{dahlke@mathematik.uni-marburg.de}}\ \thanks{The work of this author has been supported by Deutsche Forschungsgemeinschaft (DFG), Grant No. DA 360/22-1.}
\quad and \quad 
Cornelia Schneider\footnote{\emph{Corresponding author}. Friedrich-Alexander University Erlangen-Nuremberg, Applied Mathematics III, Cauerstr. 11, 91058 Erlangen, Germany. Email: \href{mailto:cornelia.schneider@math.fau.de}{cornelia.schneider@math.fau.de}}\ \thanks{The work of this author has been supported by Deutsche Forschungsgemeinschaft (DFG), Grant No. SCHN 1509/1-2.\hfill 
}}

\date{\today}

\begin{document}

\maketitle

\begin{abstract}
This paper is concerned with the regularity of  solutions to parabolic evolution equations. 
Special attention is paid to the smoothness in the specific anisotropic scale 
$\ B^{r\bfa}_{\tau,\tau}, \ \frac{1}{\tau}=\frac{r}{d}+\frac{1}{p}\ $ of Besov spaces where $\bfa$ measures the anisotropy.   The regularity in these spaces determines the approximation order
that can be achieved by fully space-time adaptive approximation schemes. In particular, we show that for the heat equation our results significantly improve \cite{AG12}. \\[2ex]
{{\em Math Subject Classifications. Primary:}   35B65,  46E35.   {\em Secondary:}  35K05, 65M12. }\\[1ex]
{{\em Keywords and Phrases.} Parabolic PDEs, Lipschitz domains, anisotropic Sobolev spaces, anisotropic Kondratiev spaces, anisotropic Besov spaces, extension operator, heat equation, adaptivity.}

\end{abstract}

\tableofcontents

\section{Introduction}

This paper is concerned with the study of anisotropic Besov regularity estimates for parabolic partial diffenential equations (PDEs). Regularity estimates in Besov spaces are always important since they determine the convergence order of adaptive and other nonlinear constructive approximation schemes for the corresponding, unknown solution. In contrast to this, it is the classical Sobolev smoothness that determines the convergence order of more conventional, uniform schemes. We refer e.g. to DeVore \cite{DV98} and \cite{DDV97}. For elliptic PDEs, a lot of results in this direction have been achieved  in recent years, see   \cite{DDV97,DHS17,DS09} and many others. In all these cases, the Besov smoothness was generically higher than the Sobolev regularity which justifies the use of adaptive algorithms. However, most of these results are concerned with {\em isotropic} Besov estimates which fit perfectly to the stationary character of elliptic partial differential equations. Quite recently, also Besov regularity results for parabolic PDEs have been investigated, see e.g. \cite{DS17, Sch21}. In these works, the authors derived time-dependent Besov regularity in space. In particular, they determine the convergence order of space-adaptive numerical schemes  such as classical time-marching schemes for parabolic equations. For good reasons, in recent years the development of numerical schemes working on the whole space-time cylinder has become more and more important   \cite{SS09}. In many cases, these schemes are simply more efficient. However, if we take the whole space-time cylinder into account, then anisotropic structures occur, since we have (in the simplest case) one derivative in time but two derivatives in space. Therefore, anisotropic Besov spaces might be reasonable choices for the regularity spaces. First results for the heat equation have been obtained by Aimar and Gomez \cite{AG12}. The results in this very interesting paper rely on a certain interpolation technique  in scales with $p>1$, which naturally limits the applicability of their approach. Therefore, in this paper, we follow a different line:  In the meantime, it has turned out that a very efficient way to establish Besov regularity for the solution to a  PDE is first to study the regularity in weighted Sobolev spaces, the  so-called Kondratiev spaces \cite{DHS17a}. The reason is that very sharp embeddings of Kondratiev spaces into Besov spaces habe been  derived. The  whole program has, e.g., very efficiently been carried out in \cite{DHS17, Sch21}. Usually,  Kondratiev spaces can be used for a very precise description of the singularities of the solutions. For our purposes, clearly anisotropic Kondratiev spaces are needed. Therefore, in our setting, the following tasks have to be solved:
\begin{itemize}
\item Define suitable anisotropic Kondratiev spaces and establish embeddings into anisotropic Besov spaces.
\item Establish anisotropic Kondratiev regularity for the problem under
consideration.
\end{itemize}
In our case, we define the anisotropic Kondratiev spaces simply by means of anisotropic weigths, whereas the anisotropic Besov spaces are defined by tensor products of differently scaled wavelets, where the scaling is compatible with the anisotropy. In this setting, the desired embedding is possible. Moreover, we show that for the heat equation the regularity problem in anisotropic Kondratiev spaces is solvable. 
Combining these facts yields our main result. \\
This paper is organized as follows: In Section 2 we recall the notation used throughout the paper. Section 3 is dedicated to anisotropic function spaces and their relations. In particular, we deal with anisotropic Sobolev and Besov spaces. Moreover, we introduce anisotropic Kondratiev spaces and study in Section 4 their relations with anisotropic Besov spaces via embeddings. Finally, in Section 5 we use our obtained results in order to investigate the regularity of solutions of the heat equation in anisotropic Besov spaces and compare the outcome with the results from  \cite{AG12}.

\section{Preliminaries}
\label{app-not}

We collect some  notation used throughout the paper. As usual,  we denote by $\nat$ the set of all natural numbers, $\nat_0=\mathbb N\cup\{0\}$, and 
$\real^d$, $d\in\nat$,  the $d$-dimensional real Euclidean space with $|x|$, for $x\in\real^d$, denoting the Euclidean norm of $x$. 
By $\mathbb{Z}^d$ we denote the lattice of all points in $\real^d$ with integer components. 
For $a\in\real$, let  
$\lfloor a\rfloor $ denote its integer part and $a_+:=\max(a,0)$. \\
Moreover,  $c$ stands for a generic positive constant which is independent of the main parameters, but its value may change from line to line. 
The expression $A\lesssim B$ means that $ A \leq c\,B$. If $A \lesssim
B$ and $B\lesssim A$, then we write $A \sim B$.  

Given two quasi-Banach spaces $X$ and $Y$, we write $X\hookrightarrow Y$ if $X\subset Y$ and the natural embedding is bounded. 
By $\supp f$ we denote the support of the function $f$. 
Moreover, $\mathcal{S}(\real^d)$ denotes the Schwartz space of rapidly decreasing functions. The set of distributions on $\Omega$ will be denoted by $\mathcal{D}'(\Omega)$, whereas $\mathcal{S}'(\real^d)$ denotes the set of tempered distributions on $\real^d$. The terms {\em distribution} and {\em generalized function} will be used synonymously. Furthermore, let $\hat{f}$ stand for the Fourier transform on $\mathcal{S}'(\real^d)$ with inverse ${f}^\vee$. 

 For the application of a distribution $u\in \mathcal{D}'(\Omega)$ to a test function $\varphi\in \mathcal{D}(\Omega)$ we write $(u,\varphi)$. The same notation will be used if $u\in \mathcal{S}'(\real^d)$ and $\varphi\in \mathcal{S}(\real^d)$ (and also for the inner product in $L_2(\Omega)$).  For $u\in \mathcal{D}'(\Omega)$  and a multi-index $\alpha = (\alpha_1, \ldots,\alpha_d)\in \nat_0^d$, we write $D^{\alpha}u$ for the $\alpha$-th {\em generalized} or {\em distributional derivative} of $u$ with respect to $x=(x_1,\ldots, x_d)\in \Omega$, i.e., $D^{\alpha}u$ is a distribution on $\Omega$, uniquely determined by the formula   
\[
(D^{\alpha}u,\varphi):=(-1)^{|\alpha|}(u,D^{(\alpha)}\varphi), \qquad \varphi \in \mathcal{D}(\Omega). 
\]
{In particular, if  $u\in L^1_{\text{loc}}(\Omega)$ and  there exists a function $v\in L^1_{\text{loc}}(\Omega)$ such that 
\[
\int_\Omega v(x)\varphi(x)\ud x=(-1)^{|\alpha|}\int_{\Omega}u(x)D^{(\alpha)}\varphi(x)\ud x \qquad \text{for all} \qquad \varphi \in \mathcal{D}(\Omega), 
\]
we say that $v$ is the {\em $\alpha$-th weak derivative} of $u$ and  write $D^{\alpha}u=v$. 
}
We also use the notation $
\frac{\partial^k}{\partial x_j^k}u:=D^{\beta}u
$ as well as $\textcolor{black}{D^{k}_ju:=}\partial_{x_j^k}u:=D^{\beta}u$,   for some 
multi-index  $\beta=(0,\ldots, k, \ldots,0)$ with $\beta_j=k$, $k\in \nat$. 

\section{Anisotropic function spaces}

Compared to classical (isotropic) function spaces, the smoothness properties of an element in an anisotropic function space depend on a chosen direction in $\rd$. In order to capture this phenomenon, let 
 us fix throughout the paper an {\em anisotropy} $\bfa=(a_1,\ldots, a_d)\in \rd_+$   normalized by 
\beq\label{anisotropy}
\left(\frac{1}{a_1}+\ldots + \frac{1}{a_d}\right)=d. 
\eeq
Moreover, we denote by 
\begin{equation}\label{aniso-dist}
|x|_{\bfa}:= \sum_{j=1}^d |x_j|^{a_j}, \qquad x=(x_1,\ldots, x_d)\in \real^d, 
\end{equation}
the {\em anisotropic pseudo-distance} corresponding to $\bfa$. 

\subsection{Anisotropic Sobolev spaces}

Let $\mathcal{O}\subset \rd$ be a domain, $1<p<\infty$, and $\bfl =(l_1,\ldots, l_d)\in \nat_0^{d}$. Then 
\beq\label{aniso-Sob}
W^{\bfl}_p(\mathcal{O})=\left\{f\in L_p(\mathcal{O}):\ \|f|W^{\bfl}_p(\mathcal{O})\|:= \|f|L_p(\mathcal{O})\|+\sum_{i=1}^d \left\|\frac{\partial^{l_i}f}{\partial x_i^{l_i}}\Big|L_p(\mathcal{O})\right\|<\infty\right\}
\eeq
is an anisotropic Sobolev space. If $l_1=\ldots =l_d=l$, then $W^{\bfl}_p(\mathcal{O})=W^l_p(\mathcal{O})$ is the usual (isotropic Sobolev space). We see that in contrast to the usual Sobolev spaces, the smoothness properties of an element of an anisotropic Sobolev space depend in general on the chosen direction in $\rd$. For $\bfalpha=\alpha \bfa$ with  $\alpha\in \real$ and $\bfa$ as in \eqref{anisotropy}, corresponding anisotropic Bessel potential spaces $H^{\bfalpha}(\rd)$ (=$H^{\alpha\bfa}(\rd)$) can be defined   via 
\[
H^{\alpha \bfa}(\rd):=\left\{f\in L_2(\rd):  \  \|f|H^{\alpha \bfa}(\rd)\|:=\left\|(1+|\xi|_{\bfa}^2)^{\alpha/2}\hat{f}(\xi)| L_2(\rd)\right\|<\infty \right\}. 
\]

\begin{remark}{
For   the regularity studies in \cite{AG12}  the authors were mainly interested in solutions of the  homogeneous heat equation $\partial_t u-\Delta u=0$. Therefore,  special attention was paid to the anisotropic Sobolev spaces  $W^{2,1}_p(\Omega)$ normed by 
\[
\left\|u|W^{2,1}_p(\Omega)\right\|
:=\left\|u|L_p(\Omega)\right\|+\sum_{i=1}^d \left\|\frac{\partial}{\partial{x_i}}u\Big|L_p(\Omega)\right\|
+ \sum_{i,j=1}^d \left\|\frac{\partial^2}{\partial{x_i\partial x_j}}u\Big|L_p(\Omega)\right\|
+ \left\|\frac{\partial}{\partial{t}}u\Big|L_p(\Omega)\right\|, 
\]
defined on the space-time cylinder  $\Omega=D\times[0,T]$, where   $D\subset \rd$ is some Lipschitz domain. In particular, these spaces coincide with our anisotropic Sobolev spaces $W^{\bfl}_p(\mathcal{O})$ if we replace $\mathcal{O}\subset\rd$ by $\Omega\subset \real^{d+1}$ in \eqref{aniso-Sob} and put $\bfl=(2,\ldots, 2,1)\in \nat_0^{d+1}$. 
}
\end{remark}

\subsection{Anisotropic Besov spaces, wavelet decompositions}

\textcolor{black}{
We first recall the definition of anisotropic Besov spaces on $\Omega\subset \rd$. Whenever $f$ is a function in $\Omega$, we denote by $\Delta^k_hf$ the difference of order $k\geq 1$ and step $h\in \rd$, defined iteratively via 
\[
(\Delta^{\Omega}_hf)(x)=
\left.\begin{cases}
f(x+h)-f(x), & \text{if } x, x+h\in \Omega \\
0, & \text{otherwise} 
\end{cases}\right\}\quad \text{and} \quad (\Delta^{k,\Omega}_h f)(x)=\Delta^{\Omega}_h(\Delta^{k-1,\Omega}_h f)(x), \quad x\in \Omega. 
\]
If  ${\bf k}=(k_1,\ldots, k_d)$ is a multi-index with $k_i\geq 0$, we define the {\em iterated difference of order} ${\bf k}$ by 
\[
\Delta^{{\bf k}, \Omega}_h f(x)=\left(\Delta^{k_1, \Omega}_{h_1e_1}\circ \dots \circ \Delta^{k_d,\Omega}_{h_de_d} f\right)(x), 
\]
where $e_1,\ldots, e_d$ denotes the canonical basis of $\rd$. Moreover, let  $\bfalpha=\alpha \bfa=(\alpha_1,\ldots, \alpha_d)$ with  $\alpha>0$, $\bfa$ as in \eqref{anisotropy} and let  $0< p,q<\infty$. We say that $f\in L_p(\Omega)$ belongs to the anisotropic Besov space ${\bf B}^{\bfalpha}_{p,q}(\Omega)$ if the semi-norm
\[
|f|_{{\bf B}^{\bfalpha}_{p,q}(\Omega)}=\sum_{i=1}^d \left(\int_0^{\infty}t^{\alpha_i}\|\Delta^{k_i,\Omega}_{te_i}f|L_p(\Omega)\|^q \frac{dt}{t}\right)^{1/q}
\]
is finite (here $k_i$ are integers such that $k_i>\alpha_i$, $i=1,\ldots, d$). Moreover, the norms 
\[
\|f|{\bf B}^{\bfalpha}_{p,q}(\Omega)\|:=\|f|L_p(\Omega)\|+|f|_{{\bf B}^{\bfalpha}_{p,q}(\Omega)}, 
\]
are known to be equivalent for any choice $k_i>\alpha_i$. Finally, the isotropic Besov spaces $B^s_{p,q}(\Omega)$ are nothing but ${\bf B}^{\bfalpha}_{p,q}(\Omega)$ if $\bfalpha=(s,\ldots, s)$. 
For our studies below it will be convenient to use another approach and define} anisotropic Besov spaces $B^{\alpha \bfa}_{p,q}(\Omega)$ via  wavelet decompositions,  valid for the whole range $0<p,q<\infty$. 
In particular,  our wavelet approach is based on compactly supported wavelets and a dilation adapted to the anisotropy of the spaces. 
Such a characterization of anisotropic Besov spaces was developed in \cite{GHT04} with the  forerunners \cite{GT02,Hoch02}. Note that we adapt the results presented there according to our needs. 
The wavelet system we are looking for will be dilated by a matrix $M$, where 
\begin{equation}\label{M-compatible}
M:=\mathrm{diag}\left(\lambda^{1/a_1},\ldots, \lambda^{1/a_d}\right) \quad \text{for some }\quad  \lambda>1, 
\end{equation}
which is 'compatible' with the anisotropy $\bfa$ in the sense that one recovers the correct homogeneity over Besov semi-norms, i.e., 
\[
|\det M|^{1/p}|f(M\cdot)|_{{\bf B}^{\alpha\bfa}_{p,q}}=\lambda^{\alpha}|f|_{{\bf B}^{\alpha \bfa}_{p,q}}. 
\]
In particular, \textcolor{black}{also with this approach} we recover the isotropic Besov spaces $B^{s}_{p,q}(\Omega)$ based on dyadic dilations by setting  $a_1=\ldots=a_d=1$, $\alpha=s$, and $\lambda=2$. \\

\textcolor{black}{We briefly recall our  wavelet approach  based on multi-resolution analysis: } 
 For our definition  of the anisotropic Besov spaces, we will use  compactly supported wavelets constituting Riesz-bases in $L_2(\mathbb{R})$,
 that are obtained by dilating, translating and scaling a  fixed function, the so--called  {\em mother wavelet} $\psi$. 
 This mother wavelet is usually constructed by means of a  {\em multiresolution analysis (MRA)}  that is, a  sequence   $\{V_j\}_{j \in \mathbb{Z}}$   of shift-invariant, closed subspaces of $L_2(\mathbb{R})$ whose union is dense in $L_2$ while their intersection is zero. Moreover, all the spaces are related via   dilation, and the space   $V_0$ is spanned  by the translates of  a fixed function $\phi$,  called the {\em generator} or {\em  father wavelet}. 
 \textcolor{black}{We put  $\psi^0:=\phi$ and $\psi^1:=\psi$   and denote by $U$ the nontrivial vertices of the square $[0,1]^d$. Then 
by taking tensor products, i.e.,  
\[
\psi^u(x_1,\ldots, x_d):=\prod_{j=1}^d \psi^{u_j}(x_j), \quad u=(u_1,\ldots, u_d)\in U, 
\]
}
a compactly supported  basis for $L_2(\mathbb{R}^d)$ can be constructed. In contrast to the isotropic case our wavelets are constructed such that they are well  adapted to the anisotropy $\bfa$, which is achieved by using the diagonal dilation Matrix $M$ from \eqref{M-compatible} compatible with $\bfa$. For this reason we will call them  $M$-wavelets in the sequel. 

The existence of compactly supported scaling functions  (and wavelets) for an arbitrary dilation matrix $M$ is a delicate matter. Concrete examples when $M$ has a relatively simple form can be found in \cite{Ay99,HL99,HRZ99}.   
\textcolor{black}{However, since we consider tensor products of wavelets the situation simplifies considerably in our context. In this case $M$ is diagonal and we only dilate differently in different directions. Additionally, we may} assume that $M$ is integer valued and put $m=|\det M|=\lambda^d$. 
Note that from the  discussion in \cite[Sect. 3.3]{GT02} it follows that this is not a severe restriction in our construction since  for all anisotropies $\bfa\in \mathbb{Q}^d_+$ there exists a number $\lambda>1$ such that $\lambda^{1/a_1}, \ldots, \lambda^{1/a_d}\in \nat$.   

We now explain what  we call an {\em admissible biorthogonal $M$-wavelet bases} in the sequel. For the precise construction we refer to \cite{GT02, GHT04}. 
Let ${\phi}$ be a compactly supported scaling function, the {\em father wavelet}, of tensor product type on $\real^d$  having sufficiently high smoothness  and let $\Psi'=\{\psi_i: \ i=1,\ldots, m-1\}$ be the set containing the corresponding multivariate mother wavelets such that, for a given $L\in \nat$ with $L>d/2$ and some $N>0$ the following requirements hold:  For all $\psi\in \Psi'$, 
\begin{align}
\supp{\phi}, \ \supp \psi   & \ \subset \ [-N,N]^d, \label{wavelet-1}\\
{\phi}, \ \psi  & \ \in \ H^{L\bfa}(\real^d), \label{wavelet-2}\\
\psi & \ \perp\  \Pi_{L-1}:=\mathrm{span}\left\{x^{\bfl} =x_1^{l_1}\cdots x_d^{l_d}: \ |\bfl|=l_1+\ldots+l_d\leq L-1 \right\}. 
\label{wavelet-3}
\end{align}  
In particular,  \eqref{wavelet-3} guarantees that the mother wavelets $\psi$ are orthogonal to the polynomials $\Pi_{L-1}$ of order less than $L$, which is possible by the  assumptions \eqref{wavelet-1} and \eqref{wavelet-2},  
cf.  \cite[Prop.~3.3]{GT02}. 
Moreover, by $\mathcal{D}^{+}$ we denote the set of all  \textcolor{black}{cuboids} in $\real^d$ with measure at most $1$ of the form 
\[
\mathcal{D}^{+}:=\left\{I\subset \real^d: \ I=M^{-j}([0,1]^d+k), \ j\in \nat_0, \ k\in \mathbb{Z}^d\right\}
\]
and we set $\mathcal{D}_j:=\{I\in \mathcal{D}^+: \ |I|=\lambda^{-jd}\}.$ 
For the  shifts and dilations of the father wavelet and the corresponding wavelets we use the abbreviations 
\begin{equation}\label{wavelet-4}
{\phi}_k(x):={\phi}(x-k), \quad \psi_{I}(x):=|\det M|^{j/2}\psi(M^jx-k) \qquad \text{for}\quad  j\in \nat_0, \ k\in \mathbb{Z}^d, \ \psi\in \Psi'. 
\end{equation}
It follows that 
\[
\left\{{\phi}_k, \ \psi_{I}:  \ k\in \mathbb{Z}^d, \  I\in \mathcal{D}^+, \  \psi\in \Psi'\right\}
\]
is a Riesz basis in $L_2(\real^d)$. 
\textcolor{black}{Furthermore, we assume that there exists a dual basis also constructed by means of an MRA $\{\tilde{V}_j\}_{j\in \mathbb{Z}}$, i.e.,  functions $\tilde{\phi}$ and $\tilde{\psi}\in \tilde{\Psi}^{'}=\{\tilde{\psi}_i: \ i=1,\ldots, m-1\}$ satisfying 
\begin{align}
    \langle \tilde{\phi}_k, \psi_I\rangle& =\langle  \tilde{\psi}_I, {\phi}_k\rangle=0, \\
     \langle \tilde{\phi}_k, \phi_l\rangle& =\delta_{k,l} \qquad (\text{Kronecker symbol}), \\
     \langle \tilde{\psi}_I, \psi_{I'}\rangle& =\delta_{I,I'}.  
\end{align}
The dual Riesz basis should fulfil the same requirements as the primal Riesz basis, i.e., 
\begin{align}
     \supp \tilde{\phi}, \ \supp \tilde{\psi} &\subset [-N,N]^d, \\
     \tilde{\phi}, \ \tilde{\psi}& \in H^{L\bfa}(\rd), \\
     \tilde{\psi}& \perp \Pi_{L-1}. 
\end{align}
}

Denote by $Q(I)$ some  \textcolor{black}{cuboid} (of minimal size) such that $\supp \psi_I \subset Q(I)$ for every $\psi\in \Psi'$. Then, we may assume that $Q(I)=M^{-j}k+M^{-j}Q$ for some  \textcolor{black}{cuboid} $Q$. Put $\Lambda'=\mathcal{D}^{+}\times \Psi'$.  
Then, every function $f\in L_2(\real^d)$ can be written as 
\[
f=
\sum_{k\in \mathbb{Z}^d}\langle f,{\tilde{\phi}}_k\rangle {{\phi}}_k +\sum_{(I,\psi)\in \Lambda'}\langle f, {\tilde{\psi}}_I\rangle \psi_I.  
\]
It will be convenient to include ${\phi}$ into the set $\Psi'$. We use the notation ${\phi}_I:=0$ for $|I|<1$, ${\phi}_I={\phi}(\cdot-k)$ for $I=k+[0,1]^d$, and can simply write 
\begin{equation}\label{dec-help}
f=\sum_{(I,\psi)\in \Lambda}\langle f, \tilde{\psi}_I\rangle \psi_I, \qquad \Lambda=\mathcal{D}^+\times \Psi, \quad \Psi=\Psi'\cup \{{\phi}\}.
\end{equation}

The two systems $\{\phi_k, \psi_I\}_{k,I}$ and $\{\tilde{\phi}_k,\tilde{\psi}_I\}_{k,I}$ constructed as above are said to be a {\em pair of admissible biorthogonal $M$-wavelet bases} and they may be used to obtain decompositions of many classical function spaces. In particular, 
according to \cite[Thm.~1.2]{GHT04} and \cite[Thm.~1.2]{GT02} anisotropic Besov spaces on $\real^d$ can be characterized by decay properties of the wavelet coefficients, if the parameters fulfill certain conditions. This characterization motivates the following definition.  \\

\begin{definition}[{Anisotropic Besov spaces, wavelet decompositions}]\label{def-B-aniso} \phantom{m}\\
Let $\bfalpha=(\alpha_1,\ldots, \alpha_d)\in \rd_+$ and      $0<p,q<\infty$.  Moreover, let $\bfalpha=\alpha \bfa$, where 
$\alpha>\max\left\{0,d(1/p-1)\right\}$ and the anisotropy $\bfa$ is  normalized as in \eqref{anisotropy}. Let $M$ be a dilation matrix compatible with $\bfa$. We assume that  $\{\phi_k, \psi_I\}_{k,I}$ and $\{\tilde{\phi}_k,\tilde{\psi}_I\}_{k,I}$ is a pair of biorthogonal admissible $M$-wavelet bases with  $\phi, \tilde{\phi}, \psi, \tilde{\psi}\in H^{L\bfa}(\rn)$ for some integer $L>\max\{d/2, \alpha_1,\ldots, \alpha_n\}$. 
Then \textcolor{black}{the Besov space $B^{\alpha \bfa}_{p,q}(\real^d)$ ($=B^{\bfalpha}_{p,q}(\real^d)$) is defined as the set of all functions $f\in L_p(\real^d)$ satisfying }
\begin{equation}\label{besov-decomp}
f=\sum_{k\in \mathbb{Z}^d}\langle f,{\tilde{\phi}}_k\rangle {\phi}_k +\sum_{(I,\psi)\in \Lambda'}\langle f, \tilde{\psi}_I\rangle \psi_I  
\end{equation}
(convergence in $\mathcal{S}'(\real^d)$) with 
\begin{align}
\|f|B^{\alpha \bfa}_{p,q}(\real^d)\| 
&\sim  \left(\sum_{k\in \mathbb{Z}^d} |\langle f,{\tilde{\phi}}_k\rangle|^p\right)^{1/p} + \notag\\
& \qquad   \left(\sum_{j=0}^{\infty}|\det M|^{j\left(\frac{\alpha}{d}+(\frac 12-\frac 1p)\right)q}\left(\sum_{(I,\psi)\in \mathcal{D}_j\times \Psi'}|\langle f, \tilde{\psi}_{I}\rangle|^p\right)^{q/p}\right)^{1/q}<\infty.\label{besov-norm}
\end{align}
\end{definition}

{
\begin{remark}{\label{rem-bf=b}
\bit 
\item[(i)]
 In particular, for the adaptivity scale  $B^{\alpha \bfa}_{\tau,\tau}(\real^d)$ with $\alpha=d\left(\frac{1}{\tau}-\frac 1p\right)$,  we see that the quasi-norm  \eqref{besov-norm} becomes 
\begin{align}
\|f|B^{\alpha \bfa }_{\tau,\tau}(\real^d)\|&\sim  \left(\sum_{k\in \mathbb{Z}^d} |\langle f,{{\phi}}_k\rangle|^{\tau}\right)^{1/\tau} +   
   \left(\sum_{j=0}^{\infty}|\det M|^{j\left(\frac 12-\frac 1p\right)\tau}\sum_{(I,\psi)\in \mathcal{D}_j\times \Psi'}|\langle f, {\psi}_{I}\rangle|^{\tau}\right)^{1/\tau}. \notag\label{besov-norm2}
\end{align}
\item[(ii)] \textcolor{black}{From  \cite[Thm. 1.2]{GT02}  we deduce that ${\bf B}^{\alpha \bfa}_{p,q}(\rd)= B^{\alpha \bfa}_{p,q}(\rd)$ for the range of parameters 
$$\alpha>0,\quad 1\leq p,q<\infty, $$ 
whereas  \cite[Thm. 1.2]{GHT04} additionally covers the case 
$${\bf B}^{\alpha \bfa}_{\tau,\tau}(\rd)=B^{\alpha \bfa}_{\tau,\tau}(\rd),\qquad  
\alpha>\max\left\{0, d\left(\frac{1}{\tau}-1,0\right)\right\},\quad \frac{1}{\tau}=\frac{\alpha}{d}+\frac 1p, \quad 0<\tau<\infty. 
$$}
Thus, we see that the range of spaces we consider in Definition \ref{def-B-aniso} is larger.  The restriction $\alpha>\max\left\{0,d(1/p-1)\right\}$ is necessary since it guarantees that our anisotropic Besov spaces considered in Definition \ref{def-B-aniso}  satisfy $B^{\alpha \bfa}_{p,q}(\rd)\hookrightarrow L_{\max\{1+\varepsilon,p\}}(\rd)$, see also   \cite[Cor. 5.4]{GHT04} in this context. 
\item[(iii)] Interpretation: From the above construction of the anisotropic spaces we see that  $\alpha$ describes mean smoothness and $\bfa$ measures the anisotropy. 
\item[(iv)] If $p=q=2$, then $B^{\alpha \bfa}_{2,2}(\rd)$ coincides with the anisotropic Bessel potential space, i.e., we have 
$$\textcolor{black}{({\bf B}^{\alpha \bfa}_{2,2}(\rd)= )} \ B^{\alpha \bfa}_{2,2}(\rd)=H^{\alpha \bfa}(\rd). $$ 
Furthermore, if $\alpha \bfa=\bfl =(l_1,\ldots, l_d)$ is an integer-valued multi-index, one recovers the  anisotropic Sobolev spaces 
\[
{({\bf B}^{\bfl}_{2,2}(\rd)=)} \  B^{\bfl}_{2,2}(\rd)=H^{\bfl}(\rd)=W^{\bfl}_2(\rd). 
\]
\item[(v)] As already mentioned before, we adapted the results from \cite[Thms. 1.2]{GT02, GHT04} slightly. 
In particular, we consider wavelets which form a biorthogonal basis for $L_2(\rd)$ instead of $L_p(\rd)$ which leads to the weight  factor  $|\det M|^{j\left(\frac{\alpha}{d}+(\frac 12-\frac 1p)\right)}$ regarding the decay of the wavelet coefficients $\langle f, \tilde{\psi}_{I}\rangle$ in \eqref{besov-norm} instead of $|\det M|^{\frac{j\alpha}{d}}$ in \cite[Thm.~1.2]{GT02}.  \\
Moreover, the additional condition that $\phi,\tilde{\phi}\in H^{L\bfa}(\rd)\cap B^{\alpha_0\bfa}_{p,q}(\rd)$ for some $\alpha_0>0$ can be circumvented by choosing $L$ large enough. This can be seen as follows: Since $\phi,\tilde{\phi}$ have compact support we deduce from the the definition of the spaces  that for  $p<2$ and $0<q\leq \infty$, 
\[
\phi, \tilde{\phi}\in H^{L\bfa}(\rd)=B^{L
\bfa}_{2,2}(\rd)\hookrightarrow B^{\alpha_0\bfa}_{p,q}(\rd). 
\]
For the case that  $2\leq p$ we use \cite[Thm. 18.4]{BNI80a} and obtain
\[
H^{L\bfa}(\rd)=B^{L
\bfa}_{2,2}(\rd)\hookrightarrow B^{\alpha_0\bfa}_{p,q}(\rd)\quad \text{if}\quad L>\alpha_0+d\left(\frac 12-\frac 1p\right). 
\]
\item[(vi)] In particular, if  $1\leq p,q< \infty$ our anisotropic Besov spaces $B^{\alpha \bfa}_{p,q}$  coincide with the spaces from \cite{GT02, GHT04}. Therefore, by  \cite[Cor.~5.3]{GHT04} we have the following interpolation result: 
\begin{equation}\label{aniso-interpol}
\big(L_p(\rd), B^{\alpha\bfa}_{p,r}(\rd)\big)_{\theta,q}=B^{\theta\bfa}_{p,q}(\rd), \qquad 0<\theta<1, \quad 0<r<\infty. 
\end{equation}
\eit 
}
\end{remark}
}

Corresponding function spaces on domains $\mathcal{O}\subset \real^d$ can be introduced via restriction, i.e., 
\begin{eqnarray*}
B^{\alpha \bfa}_{p,q}(\mathcal{O})&=& \left\{f\in \mathcal{D}'(\mathcal{O}): \ \exists g\in B^{\alpha \bfa}_{p,q}(\real^d), \ g\big|_{\mathcal{O}}=f \right\},\\
\|f|B^{\alpha \bfa}_{p,q}(\mathcal{O})\|&=& \inf_{g|_{\mathcal{O}}=f}\|f|B^{\alpha \bfa}_{p,q}(\real^d)\|. 
\end{eqnarray*}

\subsection{Domains allowing  extensions}

 In what follows we want to investigate anisotropic function spaces on more general domains $\Omega\subset \rd$. So far we introduced anisotropic spaces on $\rd$, where a lot of the tools we need (in particular, wavelet decompositions of anisotropic Besov spaces) are available. Then corresponding spaces on domains can be defined via restriction.  
 Now, in order to truly establish our results on domains $\Omega$,  we need an extension operator for our anisotropic  spaces. Such extensions of anisotropic spaces defined on   $\Omega$ to the whole $\rd$ are possible if $\Omega$ satisfies what is called a strong $\bfr$-horn condition. 
 In order to explain this condition we need some notation. Let $\bfr=(r_1,\ldots, r_d)$ be a vector with positive components. Suppose that $0<h\leq \infty$, $\varepsilon >0$, and $\bfb\in \rd$ with $b_i\neq 0$ for $i=1,\ldots, d$. The set 
 \[
 V(\bfr,h,\varepsilon,\bfb):=\bigcup_{0<\nu<h}\left\{
 x: \ \frac{x_i}{b_i}>0,  \ \nu<\left(\frac{x_i}{b_i} \right)^{r_i}<(1+\varepsilon)\nu\quad \text{for}\quad i=1,\ldots, d
 \right\}
 \]
 is called an {\em $\bfr$-horn of radius $h$ and opening $\varepsilon$}. \\

 \begin{minipage}{0.43\textwidth}
 The diagram aside illustrates $\bfr$-horns for different parameters $\bfr$ and $\bfb$ in $\real^2$: 
 \begin{eqnarray*}
 V_1&=& V\big((1,1),h,\varepsilon, (1,1)\big);\\[0.2cm] 
 V_2&=& V\big((1,1),h,\varepsilon, (1,2)\big);\\[0.2cm] 
 V_3&=& V\big((2,1),h,\varepsilon, (1,1)\big).
 \end{eqnarray*}
 
 In particular, we see that in the isotropic case (see $V_1$ and $V_2$ with $r_1=r_2=1$) the horn is just a cone. If we have an anisotropy (see $V_3$ with $r_1=2$ and $r_2=1$) the different scaling exponent in the different directions causes the cone to become a horn. Moreover, the vector $\bfb$ specifies the exact location of the $\bfr$-horn in the coordinate system (compare $V_1$ with $\bfb=(1,1)$ with $V_2$ where $\bfb=(1,2)$). 
 
 \end{minipage}\hfill\begin{minipage}{0.55\textwidth}
 \includegraphics[width=9cm]{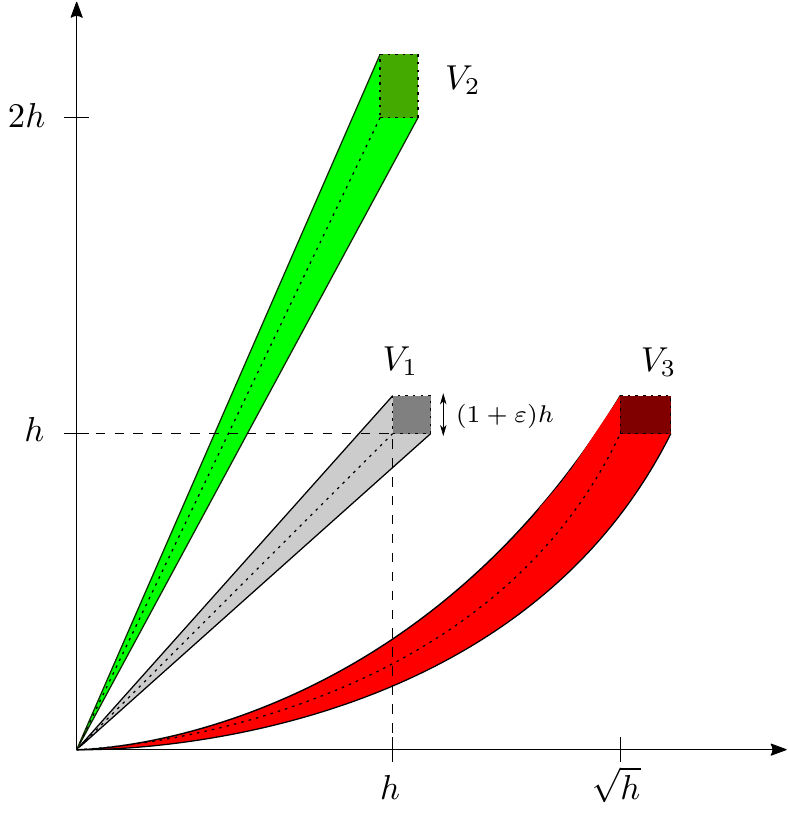}
 \end{minipage}
 
 An open set $\Omega\subset \rd$ is said to satisfy a {\em weak $\bfr$-horn condition} if there is a positive integer $N$ such that for each $j\in \{1,\ldots, N\}$, there are open sets $\Omega_j$ and an $\bfr$-horn $V_j(\bfr,h,\varepsilon, \bfb^{(j)})$ such that 
 \begin{equation}\label{weak-horn}
 \Omega=\bigcup_{j=1}^N\Omega_j=\bigcup_{j=1}^N\left(\Omega_j+ V_j(\bfr,h,\varepsilon, \bfb^{(j)})\right). 
 \end{equation}
 The relation \eqref{weak-horn}  expresses the fact that for any point $x\in \Omega_j$, if the horn $V_j(\bfr,h,\varepsilon, \bfb^{(j)})$ is shifted parallel to itself in such a way that its vertex coincides with $x$, then the resulting shifted horn lies in $\Omega$.  
 If, in addition, there exists $\delta>0$ such that 
 $$\Omega=\bigcup_{j=1}^N \Omega_j^{(\delta)}, \quad \text{where} \quad \Omega_j^{(\delta)}=\left\{x\in \Omega_j: \ \mathrm{dist}(x,\Omega\setminus \Omega_j)>\delta\right\},$$ 
 then $\Omega$ is said to satisfy a {\em strong $\bfr$-horn } condition. Note that if $r_1=r_2=\ldots=r_d$, every $\bfr$-horn is a cone. It is possible in this case to show that the concept of a domain having a Lipschitz boundary coincides with the concept of a domain satisfying the $\bfr$-horn condition.

 The following theorem can be found in \cite[Thm.~9.6]{BNI80} and \cite[Thm. 2, p.~382]{Nik75}.

 \begin{theorem}\label{thm:ext-op}
 Suppose  $\Omega\subset \rd$ satisfies a strong $\bfr$-horn condition and  $1\leq p,q\leq \infty$. 
 \begin{itemize}
     \item[(i)] Let $1<p<\infty$. Then $W^{\bfr}_p(\Omega)$ is the set of all functions which are the restrictions to $\Omega$ of elements of $W^{\bfr}_p(\rd)$. In particular, there is a bounded, linear extension map $E: W^{\bfr}_p(\Omega)\rightarrow W^{\bfr}_p(\rd)$. 
     \item[(ii)]  Furthermore, ${\bf B}^{\bfr}_{p,q}(\Omega)$ is the set of all functions which are the restrictions to $\Omega$ of elements of 
     ${\bf B}^{\bfr}_{p,q}(\rd)$. In particular, there is a bounded, linear extension map $E: {\bf B}^{\bfr}_{p,q}(\Omega)\rightarrow {\bf B}^{\bfr}_{p,q}(\rd)$. 
  \end{itemize}
 \end{theorem}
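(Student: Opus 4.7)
The plan is to follow the classical Besov-Il'in-Nikolskii approach: reduce the extension problem on $\Omega$ to a union of simpler local extensions via a partition of unity, then on each patch construct the extension by means of an anisotropic Sobolev-type integral representation whose kernel is supported in the horn. First, I would choose a smooth partition of unity $\{\eta_j\}_{j=1}^N$ subordinate to the covering $\Omega=\bigcup_{j=1}^N\Omega_j^{(\delta)}$ supplied by the strong $\bfr$-horn condition. Since every $\Omega_j^{(\delta)}$ lies at distance at least $\delta$ from the part of $\partial\Omega$ not shared with $\Omega_j$, the problem reduces to extending each localized piece $f_j:=\eta_j f$, which is supported in $\Omega_j^{(\delta)}$ and satisfies the property that for every $x\in\supp f_j$ the translated horn $x+V_j(\bfr,h,\varepsilon,\bfb^{(j)})$ lies inside $\Omega$.

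For a single fixed piece, I would build the local extension via an anisotropic Sobolev-type integral identity adapted to $V_j$. Pick a smooth mollifier $\omega$ supported in the opposite horn $-V_j(\bfr,1,\varepsilon,\bfb^{(j)})$ with enough vanishing moments; a repeated application of the fundamental theorem of calculus along the horn directions (in the spirit of Sobolev's integral representation) yields, for $x\in\Omega_j^{(\delta)}$, a decomposition
\[
 f(x)=\sum_{|\bfl|\leq M}\int K_{\bfl}(x-y)\,D^{\bfl}f(y)\,\ud y+Rf(x),
\]
in which every kernel $K_{\bfl}$ is supported inside a translate of $V_j$ and satisfies size and smoothness estimates with respect to the anisotropic pseudo-distance $|\,\cdot\,|_{\bfa}$, and $R$ is a smoothing remainder with a bounded kernel. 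The crucial geometric feature is that the right-hand side is already defined on a whole $\rd$-neighbourhood of $\overline{\Omega_j^{(\delta)}}$, so multiplying by a suitable cutoff produces a bona fide extension; summing in $j$ gives the global operator $E$.

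The boundedness is verified by differentiating the representation. Applying $\partial^{r_i}/\partial x_i^{r_i}$ to $Ef$ transfers $r_i$ derivatives onto $K_{\bfl}$, producing convolution operators whose kernels satisfy anisotropic Calder\'on-Zygmund estimates adapted to $|\,\cdot\,|_{\bfa}$ and the dilation matrix $M$ from \eqref{M-compatible}. For $1<p<\infty$ their boundedness on $L_p(\rd)$ gives part (i). For part (ii) with $1\leq p,q<\infty$ I would apply the representation directly to the iterated anisotropic differences $\Delta^{k_i,\rd}_{te_i}Ef$ (the horn support of $K_{\bfl}$ survives being shifted by $te_i$ as long as $t$ is small), which lets one estimate the Besov seminorm on the left by the corresponding seminorm of $f$ on the right. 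Alternatively, for $1\leq p,q<\infty$ one can invoke the real interpolation identity \eqref{aniso-interpol} together with the obvious analogue on $\Omega$: since $E$ is linear and bounded both on $L_p$ and on $W^{\bfr}_p$, real interpolation yields the bound on ${\bf B}^{\bfr}_{p,q}$; the endpoint cases $p=\infty$ or $q=\infty$ are handled by the difference approach directly.

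The main obstacle I expect is the verification of the anisotropic Calder\'on-Zygmund bounds for the kernels $D^{\bfr}K_{\bfl}$. The horn opening parameters were chosen precisely to match $\bfr$, so the geometric picture works out cleanly, but the analytic work requires checking a H\"ormander-type condition with respect to the anisotropic pseudo-distance $|x|_{\bfa}$ and the anisotropic balls generated by $M$. In particular one must show that the moment vanishing of $\omega$ propagates through the differentiations to produce the necessary cancellation, and that the tail bounds are uniform in the orientation of the horn; once these kernel estimates are in place, both (i) and (ii) follow by standard convolution arguments, and applying the construction simultaneously to the $N$ horns of $\Omega$ and summing yields the asserted bounded linear extension operators.
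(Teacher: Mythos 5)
The paper does not actually prove Theorem \ref{thm:ext-op}: it is quoted verbatim from the literature, namely \cite[Thm.~9.6]{BNI80} and \cite[Thm.~2, p.~382]{Nik75}, so there is no in-paper argument to compare yours against. What you have written is, in outline, a faithful reconstruction of the classical Besov--Il'in--Nikol'skij proof from exactly those references: localization via a partition of unity subordinate to the covering $\Omega=\bigcup_j\Omega_j^{(\delta)}$ furnished by the \emph{strong} horn condition, the Sobolev-type integral representation over the horn $V_j$ whose kernels are supported in $V_j$ (so that the representation formula itself furnishes the extension to a neighbourhood of $\overline{\Omega_j^{(\delta)}}$), and then $L_p$-boundedness of the resulting anisotropic singular convolution operators for the top-order derivatives --- which is precisely why part (i) carries the restriction $1<p<\infty$. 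Two points you gloss over deserve flagging. First, the localization step is not free for anisotropic spaces: $W^{\bfr}_p$ is normed only by the pure top-order derivatives $\partial^{r_i}/\partial x_i^{r_i}$, so controlling $\|\eta_j f\|_{W^{\bfr}_p}$ requires intermediate-derivative (Nikol'skij/Gagliardo--Nirenberg type) inequalities on horn domains; these are available in \cite{BNI80} but are a genuine ingredient, not a formality. Second, your interpolation shortcut for part (ii) only covers $1<p<\infty$ (it routes through part (i)), whereas the theorem asserts (ii) for all $1\leq p,q\leq\infty$; you correctly note that the endpoints must be handled by applying the representation directly to the iterated anisotropic differences, and that route should be regarded as the primary one, not the alternative. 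With those caveats, the sketch is sound and matches the cited sources; carrying out the anisotropic Calder\'on--Zygmund estimates in detail is the substantial remaining work, and for the purposes of this paper one simply cites \cite{BNI80,Nik75} instead.
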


 {
 In view of Remark \ref{rem-bf=b}(ii) we immediately obtain the following result. 
  \begin{corollary}\label{cor:ext-op}
 Suppose  $\Omega\subset \rd$ satisfies a strong $\bfr$-horn condition and  $1\leq p,q< \infty$. Then  there is a bounded, linear extension map $E: {B}^{\bfr}_{p,q}(\Omega)\rightarrow {B}^{\bfr}_{p,q}(\rd)$. 
 \end{corollary}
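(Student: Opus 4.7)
The plan is to deduce Corollary \ref{cor:ext-op} from Theorem \ref{thm:ext-op}(ii) by exploiting the coincidence of the two brands of anisotropic Besov spaces on $\rd$. Recall that on a domain the space $B^{\bfr}_{p,q}(\Omega)$ is defined via restriction, whereas ${\bf B}^{\bfr}_{p,q}(\Omega)$ is the intrinsic difference-type space for which Theorem \ref{thm:ext-op}(ii) already supplies a bounded linear extension operator. To match parameters with Remark \ref{rem-bf=b}(ii), I would write $\bfr = \alpha \bfa$ with
\[
\alpha = \frac{d}{\sum_{i=1}^d r_i^{-1}} > 0, \qquad \bfa = \frac{\bfr}{\alpha},
\]
so that $\bfa$ satisfies the normalization \eqref{anisotropy}. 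The hypotheses of Remark \ref{rem-bf=b}(ii) are then met, giving $B^{\bfr}_{p,q}(\rd) = {\bf B}^{\bfr}_{p,q}(\rd)$ with equivalent quasi-norms.

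The key intermediate step is to establish the continuous embedding $B^{\bfr}_{p,q}(\Omega) \hookrightarrow {\bf B}^{\bfr}_{p,q}(\Omega)$. Indeed, for $f \in B^{\bfr}_{p,q}(\Omega)$ and any $\varepsilon>0$ there is an extension $g \in B^{\bfr}_{p,q}(\rd)$ with $g|_{\Omega} = f$ and $\|g \,|\, B^{\bfr}_{p,q}(\rd)\| \le (1+\varepsilon)\|f \,|\, B^{\bfr}_{p,q}(\Omega)\|$. By the coincidence on $\rd$, $g$ belongs to ${\bf B}^{\bfr}_{p,q}(\rd)$ with comparable norm, and Theorem \ref{thm:ext-op}(ii) identifies ${\bf B}^{\bfr}_{p,q}(\Omega)$ as precisely the set of restrictions to $\Omega$ of elements of ${\bf B}^{\bfr}_{p,q}(\rd)$. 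Hence $f = g|_{\Omega} \in {\bf B}^{\bfr}_{p,q}(\Omega)$; letting $\varepsilon \to 0$ and taking the infimum over admissible $g$ yields $\|f\,|\,{\bf B}^{\bfr}_{p,q}(\Omega)\| \lesssim \|f\,|\,B^{\bfr}_{p,q}(\Omega)\|$.

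With this embedding in hand, the desired extension $E\colon B^{\bfr}_{p,q}(\Omega) \to B^{\bfr}_{p,q}(\rd)$ is obtained by composing the embedding above with the bounded linear extension $E_{\bf B}\colon {\bf B}^{\bfr}_{p,q}(\Omega) \to {\bf B}^{\bfr}_{p,q}(\rd)$ provided by Theorem \ref{thm:ext-op}(ii), and then identifying the target ${\bf B}^{\bfr}_{p,q}(\rd)$ with $B^{\bfr}_{p,q}(\rd)$ via Remark \ref{rem-bf=b}(ii). The resulting map is linear and bounded, and it still satisfies $(Ef)|_{\Omega} = f$ because this extension property is preserved under the norm-equivalent identifications used. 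I do not anticipate any serious obstacle here; the only real subtlety is the bookkeeping around the two equivalent definitions of the Besov spaces, which is completely handled by Remark \ref{rem-bf=b}(ii) together with Theorem \ref{thm:ext-op}(ii).
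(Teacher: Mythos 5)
Your argument is correct and is essentially the paper's own proof: the authors simply state that the corollary follows ``in view of Remark \ref{rem-bf=b}(ii)'' combined with Theorem \ref{thm:ext-op}(ii), which is exactly the identification of $B^{\bfr}_{p,q}$ with ${\bf B}^{\bfr}_{p,q}$ on $\rd$ (after the same renormalization $\bfr=\alpha\bfa$) followed by the known extension operator for the difference-type spaces. Your write-up merely makes explicit the bookkeeping (near-optimal extension, boundedness of restriction for the intrinsic spaces) that the paper leaves implicit.
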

 }

\begin{remark}{We provide examples of domains satisfying the (weak or strong) $\bfr$-horn condition:  
For $d=2$,  $\Omega=\real^2$ as well as the rectangular parallelepiped 
\[
\Omega=\left\{(x_1,x_2): \  |x_1|<a, \ |x_2|<b \right\},
\]
where $a,b>0$, satisfy the strong $\bfr$-horn condition for any $\bfr$. Moreover, the disk 
\[
Q=\left\{(x_1,x_2):\ x_1^2+x_2^2<1\right\}
\]
satisfies the weak $\bfr$-horn condition only if $\frac 12 r_1\leq r_2\leq 2r_1$ and the strong $\bfr$-horn condition only if $r_1=r_2$. \\
Since in the isotropic case, if $r_1=\ldots, r_d$ a  Lipschitz domain satisfies the strong $\bfr$-horn condition for any $\bfr$, from the product structure of  the space-time cylinder $\Omega=D\times [0,T]\subset \real^{d+1}$, where $D\subset \rd$ is a bounded Lipschitz domain, we deduce that $\Omega$  satisfies the strong $\bfr$-horn condition for any $\bfr$ of the form $\bfr=(r,\ldots, r, \frac r2)$, where $r>0$. 
}
\end{remark}

\begin{remark}{The extension operator from Theorem \ref{thm:ext-op} allows us to transfer many results (such as embeddings, interpolation, etc.), which are known for anisotropic spaces on $\rd$, to domains satisfying the horn condition. In particular, it allows us to relate the regularity spaces 
$$\mathbb{B}^{s}_{p}(\Omega):=\left(L_p(\Omega), W^{2,1}_p(\Omega)\right)_{\frac{s}{2},p}, \qquad 0<s<1, \quad 1<p<\infty,$$
appearing in \cite{AG12} (which for general $s>0$ can be defined via the action of the derivatives $\partial_t$ and $\partial_{x_j x_i}$)   
 to our spaces: According to \cite[Thm.~18.9]{BNI80a} for any $\Omega\subset \rd$ satisfying an $\bfl$-horn condition we have the embedding 
\begin{equation}\label{emb-W-B}
B^{\bfl}_{p,\min(p,2)}(\Omega)\hookrightarrow W^{\bfl}_p(\Omega)\hookrightarrow B^{\bfl}_{p,\max(p,2)}(\Omega). 
\end{equation}
Since the space-time cylinder $\Omega=D\times [0,T]$ satisfies the $\bfl$-horn condition for arbitrary $\bfl$ we deduce from \eqref{emb-W-B} and \eqref{aniso-interpol} that 
\begin{equation}\label{coinc-B-spaces}
\mathbb{B}^{s}_{p}(\Omega)=B^{s,\ldots, s,\frac s2}_{p,p}(\Omega)=B^{\tilde{s}\bfa}_{p,p}(\Omega), 
\end{equation}
using the anisotropy 
\begin{equation}\label{spec-aniso}
\bfa=\left(a_1,\ldots, a_{d+1}\right)=\left(\frac{d+2}{d}, \ldots, \frac{d+2}{d}, \frac 12\frac{d+2}{d} \right)=\frac{d+2}{d} \left(1,\ldots, 1,\frac 12\right)
\end{equation}
together with the mean smoothness $\tilde{s}=\frac{sd}{d+2}$. 
Moreover, choosing $s=p=q=2$ and  $\tilde{s}=\frac{2d}{d+2}$ yields the special case \begin{equation}\label{coinc-W-spaces}
   W^{2\ldots, 2,1}(\Omega)= B^{\frac{2d}{d+2}\bfa}_{2,2}(\Omega).
\end{equation}
}
\end{remark}

\subsection{Anisotropic Kondratiev spaces}

In this section, we introduce anisotropic Kondratiev spaces, which are special weighted anisotropic Sobolev spaces. The corresponding isotropic spaces play a central role in the regularity theory for elliptic PDEs on domains with piecewise smooth boundary, particularly polygons (2D) and polyhedra (3D). For a systematic treatment and further references  we refer to \cite{DHS17a}. In particular, in the isotropic case the weight is often chosen to be a power of the distance to the {\em singular set} of the boundary of a domain $\domain\subset \rd$, i.e., the set of all points $x\in \partial \domain$ for which for any $\varepsilon>0$ the set $\partial \domain\cap B_{\varepsilon}(x)$ is not smooth (here $B_{\varepsilon}(x)$ 
denotes the open ball in $\rd$ around a point $x$ with radius $\varepsilon>0$). \\
We adapt this idea and define now corresponding anisotropic Kondratiev spaces using weights which constitute powers of the anisotropic distance based on \eqref{aniso-dist} to a singular set $M\subset \partial \domain$.  

Precisely, let $\domain\subset \rd$ be a domain and let $M$ be a nontrivial closed subset of its boundary $\partial\domain$. Furthermore, let $1\leq p< \infty$, 
 $\bfm{=(m_1,\ldots, m_d)}=m\bfa\in \nat_{0}^d$ where the anisotropy $\bfa=(a_1,\ldots, a_d)$ is normalized as in \eqref{anisotropy}, and $\gamma\in \real$. Then the  anisotropic Kondratiev space $\mathcal{K}^{\bfm}_{p,\gamma}(\domain)$ (=$\mathcal{K}^{m\bfa}_{p,\gamma}(\domain)$)  is the collection of all $u\in \mathcal{D}'(\Omega)$ such that 

\[
\|u|\mathcal{K}^{m\bfa}_{p,\gamma}(\domain)\|
:=\left({\sum_{i=1}^d\sum_{\alpha_i\leq m_i}}\int_{\domain}\left|(\rho_{\bfa}(x))^{m-\gamma}D_i^{\alpha_i}{u}(x)\right|^p \ud x\right)^{1/p}<\infty, 
\]
where $\rho_{\bfa}(x)=\min(1, \mathrm{dist}_{\bfa}(x,M))$ and $\mathrm{dist}_{\bfa}$ denotes the anisotropic distance to  $M\subset \partial \Omega$, i.e., 
\[
\mathrm{dist}_{\bfa}(x,M)=\inf_{y\in M}|x-y|_{\bfa}\qquad \text{with} \qquad |x-y|_{\bfa}=\sum_{i=1}^d |x_i-y_i|^{a_i}. 
\]


\begin{remark}{Later on we want to compare our results with the ones obtained in \cite{AG12} on the time space cylinder $\Omega=D\times[0,T]$. In this context we remark that the weight appearing in the gradient estimates in  \cite[Thm.~4]{AG12} is comparable to our weight $\rho_{\bfa}(x)$: It is (also) based on  powers of the so-called parabolic distance $\delta(x,t)$,  which is a special anisotropic distance to the parabolic boundary $$M:=\partial_{\mathrm{par}}\Omega:=(D\times \{0\})\cup (\partial D\times [0,T]).$$   To be precise, for $(x,t)\in \Omega=D\times[0,T]$ it is  defined as 
\[
\delta(x,t):=\inf\left\{\rho((x,t),(y,s)): \ (y,s)\in \partial_{\mathrm{par}}\Omega\right\}, \qquad \rho((x,t),(y,s))\sim |x-y|+\sqrt{|t-s|}. 
\]
Thus, for the special anisotropy \eqref{spec-aniso} we see that 
$$
|(x,t)-(y,x)|_{\bfa}= \sum_{i=1}^d |x_i-y_i|^{a_i}+|t-s|^{a_{d+1}} \sim \left(\sum_{i=1}^{d} |x_i-y_i| +\sqrt{|t-s|}\right)^{\frac{d+2}{d}},
$$
which yields 
\begin{equation}\label{comp-weights}
\rho_{\bfa}(x,t)\sim \big(\delta(x,t)\big)^{\frac{d+2}{d}}. 
\end{equation}
}
\end{remark}


\section{Embeddings between anisotropic Kondratiev and Besov spaces}

\begin{theorem}[{\bf Embeddings between Kondratiev and Besov spaces}]\label{thm:emb-aniso}
Let $\bfm=m\bfa\in \nat^d$,  where the anisotropy $\bfa$ is normalized as in \eqref{anisotropy} and $\bfs=s\bfa, \ \bfr=r\bfa\in \rd_+$. 
Moreover, assume that the domain  $\Omega\subset \rd$ satisfies the strong $\bfr$-horn condition. 
Then we have a continuous embedding 
\begin{equation}\label{aniso-emb}
\mathcal{K}^{m\bfa}_{p,\gamma}(\Omega)\cap {B}^{s\bfa}_{p,p}(\Omega)\hookrightarrow {B}^{r\bfa}_{\tau,\tau}(\Omega), \qquad \frac{1}{\tau}=\frac rd+\frac 1p, \qquad 1<p<\infty,
\end{equation}
for all $0\leq r<\min(m, \frac{sd}{{d-1}})$ and $\gamma>\frac{\delta}{d}r$, where $\delta$ denotes the dimension of the singularity set $M\subset \partial \Omega$. 
\end{theorem}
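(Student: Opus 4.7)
The plan is to model the proof on the strategy developed in the isotropic setting (see \cite{DHS17a}), adapted to the anisotropic framework via the wavelet characterization of $B^{r\bfa}_{\tau,\tau}$ recorded in Remark~\ref{rem-bf=b}(i). First I would apply the extension operator from Corollary~\ref{cor:ext-op} to pass from $\Omega$ to $\real^d$ while preserving both the Besov and (a suitably extended) Kondratiev norms. I would then stratify $\Omega$ into the anisotropic layers
\[
\Omega_k:=\{x\in \Omega: \lambda^{-(k+1)}\leq \rho_{\bfa}(x)<\lambda^{-k}\},\qquad k\in \nat_0,
\]
around the singular set $M$, and split the wavelet index set $\Lambda=\mathcal{D}^+\times \Psi$ at each scale $j$ into a \emph{regular} part $\Lambda_j^{\mathrm{reg}}$, consisting of indices $(I,\psi)\in \mathcal{D}_j\times \Psi'$ with $Q(I)\cap M=\emptyset$, and a \emph{singular} part $\Lambda_j^{\mathrm{sing}}$ of those whose support meets $M$. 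A preliminary covering lemma, formulated with respect to the anisotropic pseudo-distance $|\cdot|_{\bfa}$ and governed by the dimension $\delta$ of $M$, provides the count $\#\Lambda_j^{\mathrm{sing}}$ and assigns each $I\in \Lambda_j^{\mathrm{reg}}$ to a unique layer $\Omega_{k(I)}$.

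For the regular coefficients I would combine the vanishing moment property \eqref{wavelet-3} with an anisotropic Taylor expansion of $u$ on $Q(I)$ and the $L_{p'}$-bound on $\tilde\psi_I$; this yields a bound on $|\langle u,\tilde\psi_I\rangle|$ whose factors involve (i) the anisotropic diameter of $Q(I)$ to the power $m$, (ii) the wavelet normalisation, and (iii) the local $L_p$-norms of the pure partial derivatives $D_i^{m_i} u$ on $Q(I)$. Since $\rho_{\bfa}(x)^{m-\gamma}\sim \lambda^{-k(m-\gamma)/d}$ on $\Omega_k\supset Q(I)$, the local derivative norms are dominated by $\lambda^{k(m-\gamma)/d}$ times a piece of the Kondratiev quasi-norm. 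Inserting these estimates into the target quasi-norm from Remark~\ref{rem-bf=b}(i) and rearranging the resulting double sum over $(j,k)$ produces a geometric series: summability in $j$ for fixed $k$ uses $r<m$, while summability in $k$, after applying the covering count for $M$ at scale $j$, uses exactly $\gamma>\delta r/d$.

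For the singular coefficients I would apply H\"older's inequality with exponents $p/\tau$ and its conjugate, combining the wavelet characterization of $\|u|B^{s\bfa}_{p,p}(\real^d)\|$ with the covering bound on $\#\Lambda_j^{\mathrm{sing}}$; the interplay between this count, the source and target Besov weights at scale $j$, and the gap between $p$ and $\tau$ produces a geometric series in $j$ whose convergence amounts precisely to the assumption $r<sd/(d-1)$ (using $\delta\leq d-1$). Together with the trivial estimate for the low-frequency father-wavelet term, this would complete the proof of \eqref{aniso-emb}. The principal obstacle is the simultaneous book-keeping of four competing scaling exponents -- the Kondratiev weight $\lambda^{-k(m-\gamma)/d}$, the wavelet normalisation $|\det M|^{j(1/2-1/p)}$, the anisotropic covering count for $M$, and the Besov scaling factors for source and target -- which conspire to give summable sums only under the precise constraints on $\gamma$, $r$, $m$ and $s$ in the theorem; establishing a clean anisotropic covering/partition-of-unity lemma for $M$ with respect to $|\cdot|_{\bfa}$ will be the technical heart of the argument.
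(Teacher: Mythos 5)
Your overall strategy coincides with the paper's: extend via Corollary~\ref{cor:ext-op}, use the wavelet characterization of Definition~\ref{def-B-aniso}, estimate the coefficients away from $M$ by the anisotropic Whitney estimate plus vanishing moments and H\"older with exponent $p/\tau$, and control the coefficients near $M$ through the $B^{s\bfa}_{p,p}$-norm together with a count of order $\lambda^{j\delta}$. There are, however, two concrete gaps.

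First, you never treat the wavelets whose supports meet $\partial\Omega$ (as opposed to $M$). The extension $\tilde u=Eu$ is controlled outside $\Omega$ only in the Besov norm; the paper contains no extension theorem for the anisotropic Kondratiev space, and your parenthetical ``(a suitably extended) Kondratiev norm'' is an unproved -- and unnecessary -- assertion. Consequently the Whitney/Kondratiev estimate, which needs $D_i^{m_i}\tilde u\in L_p(Q(I))$ weighted by $\rho_{\bfa}^{m-\gamma}$, is available only for cuboids with $Q(I)\subset\Omega$. The cuboids straddling $\partial\Omega$ must be estimated through $\|\tilde u|B^{s\bfa}_{p,p}(\real^d)\|$, and since there are of order $\lambda^{j(d-1)}$ of them at level $j$, this is exactly where the hypothesis $r<\frac{sd}{d-1}$ is consumed; the cubes near $M$ alone only require the weaker condition $r<\frac{sd}{\delta}$. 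In your write-up the constant $d-1$ appears merely as an upper bound for $\delta$, which misidentifies the source of the restriction. This is the paper's Step~5 and it is not optional.

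Second, your regular/singular cut sits in the wrong place. You declare $(I,\psi)$ regular as soon as $Q(I)\cap M=\emptyset$, but a cuboid at scale $j$ with $0<\rho_{I,\bfa}\ll\lambda^{-j}$ cannot be assigned to a unique layer $\Omega_{k(I)}$ (it meets many of them), and the factor $\rho_{I,\bfa}^{\gamma-m}$ produced by undoing the Kondratiev weight is then unbounded when $\gamma<m$. The correct dichotomy, as in the paper, is $\rho_{I,\bfa}\geq\lambda^{-j}$ (Kondratiev estimate, with the finer count $\#\Lambda_{j,k}\lesssim k^{d-1-\delta}\lambda^{j\delta}$ for $k\lambda^{-j}\leq\rho_{I,\bfa}<(k+1)\lambda^{-j}$, leading to the case analysis that yields $r<m$ and $\gamma>\frac{\delta}{d}r$) versus $\rho_{I,\bfa}<\lambda^{-j}$ (Besov estimate). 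With these two corrections your argument matches the paper's proof.
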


\begin{proof}
Since for $r=0$ the result is clear, we assume in the sequel that $r>0$ and $0<\tau<p$. \\
{\em Step 1.} The proof is based on the wavelet characterization of Besov spaces from Definition \ref{def-B-aniso}. {Since our domain $\Omega$ satisfies the $\bfr$-horn condition (and thus the $\bfs$-horn condition), according to Corollary \ref{cor:ext-op} we can extend every $u\in B^{s\bfa}_{p,p}(\Omega)$ to some function $\tilde{u}=Eu\in B^{s\bfa}_{p,p}(\real^d)$. From this we deduce that in order to establish embedding \eqref{aniso-emb}   it is ultimately enough to show  
\begin{equation}\label{ineq-a}
\left(\sum_{(I,\psi)\in \Lambda}|I|^{\left(\frac 1p-\frac 12\right)\tau}|\langle \tilde{u},\tilde{\psi}_I\rangle|^{\tau}\right)^{1/\tau}\lesssim  \max \{\|u|\mathcal{K}^{m\bfa}_{p,\gamma}(\Omega)\|, \|u|B^{s\bfa}_{p,p}(\Omega)\|\}.  
\end{equation}
Let us give some further explanations here. We may extend the solution $u$ to a function $\tilde{u}= E(u)$ on the whole
Euclidean plane. Then, on $\Omega$, we have  $u=\sum_{(I,\psi)\in \Lambda} \langle \tilde{u}, \tilde{
\psi}_I\rangle \psi_I $. Therefore, if we can show that the expression on the right-hand
side is contained in the Besov space $B^{r\bfa}_{\tau,\tau}(\rd)$, the same is true for its
restriction to $\Omega$. To prove this, we use Definition \ref{def-B-aniso}. 
Moreover,  we see that the first term there which reads as 
\[
\sum_{k\in \mathbb{Z}^d}\langle \tilde{u}, \tilde{\phi}(\cdot -k)\rangle {\phi}(\cdot - k)
\] 
(and also emerges in  \eqref{besov-norm}) does not appear on the left hand side of  \eqref{ineq-a}. 
This is caused by the fact that ${\phi}$ shares the same smoothness and support properties as the wavelets $\psi_I$ for $|I|=1$ (note that below the vanishing moments of $\psi_I$  only become relevant for $|I|<1$). Therefore,  the coefficients $\langle \tilde{u}, \tilde{\phi}(\cdot -k)\rangle$ are incorporated in our considerations since they can be treated exactly like any of the coefficients $\langle \tilde{u},\tilde{\psi}_I\rangle$ in Step 2. 
}\\
{\em Step 2.}  For our analysis we shall split the index set $\Lambda$ as follows.  For $j\in \nat_0$ the refinement level $j$ is denoted by 
\[
\Lambda_j:=\{(I,\Psi)\in \Lambda: \ |I|=|\det M|^{-j}=\lambda^{-jd}\}.
\]
Furthermore, for $k\in \nat_0$ put 
\[
\Lambda_{j,k}:=\{(I,\psi)\in \Lambda_j: \ k\lambda^{-j}\leq \rho_{I,\bfa}<(k+1)\lambda^{-j}\},
\]
where 
$$\rho_{I,\bfa}=\inf_{x\in Q(I)} \rho_{\bfa}=\inf_{x\in Q(I), {y\in M}}|x-y|_{\bfa}.$$



In particular, we have $\Lambda_j=\bigcup_{k=0}^{\infty}\Lambda_{j,k}$ and $\Lambda=\bigcup_{j=0}^{\infty}\Lambda_j$. \\
We consider first the situation $\rho_{I,\bfa}>0$ corresponding to  $k\geq 1$ and therefore put $\Lambda_j^0=\bigcup_{k\geq 1}\Lambda_{k,j}$. {Moreover, we require $Q(I)\subset \Omega$.} Recall the anisotropic version of Whitney's estimate  regarding approximation with polynomials from \cite[Lem.~2.1]{R12}, 
which states that for every $I$ there exists a polyomial $P_I\in \Pi_{M-1}$, where  $|\bfm|=m_1+\ldots+m_d\leq M-1$,  such that 
\begin{eqnarray*}
\|\tilde{u}-P_I|L_p(Q(I))\| &\lesssim &
\sum_{i=1}^d \lambda^{-j\frac{m_i}{a_i}}\|D^{m_i}_i \tilde{u}|L_p(Q(I))\|\\
&=&\lambda^{-jm} \sum_{i=1}^d \|D^{m_i}_i \tilde{u}|L_p(Q(I))\|
\lesssim  |I|^{m/d}  |\tilde{u}|_{W^{\bfm}_p(Q(I))}, 
%
\end{eqnarray*}
where the omitted constant is independent of $I$ and $u$. Here we used the fact that $\bfm=m\bfa$, i.e., $m=\frac{m_i}{a_i}$ for all $i=1,\ldots, d$,  and put 
%
$$|\tilde{u}|_{W^{\bfm}_p(Q(I))}:= \left({\sum_{i=1}^d}\int_{Q(I)}|D^{m_i}_i \tilde{u}(x)|^p\ud x\right)^{1/p}.$$ 
Note that $\tilde{\psi}_I$ can be chosen to satisfy moment conditions up to any  order, 
we deduce that it is orthogonal to any polynomial $P_I\in \Pi_{M-1}$. Thus, using H\"older's inequality with $p>1$ we estimate 
\begin{eqnarray}
|\langle \tilde{u},\tilde{\psi}_I\rangle|
&=&|\langle \tilde{u}-P_I,\tilde{\psi}_I\rangle|\leq \|\tilde{u}-P_I|L_p(Q(I))\|\cdot \|\tilde{\psi}_I|L_{p'}(Q(I))\|\notag\\
&\lesssim &  |I|^{m/d}|\tilde{u}|_{W^{\bfm}_p(Q(I))}|I|^{\frac 12-\frac 1p}\notag\\
&\leq &  |I|^{\frac md+\frac 12-\frac 1p}\rho_{I,\bfa}^{\gamma-m}\left({\sum_{i=1}^d}\int_{Q(I)}|\rho_{\bfa}(x)|^{m-\gamma}{D^{m_i}_i}\tilde{u}(x)|^p \ud x\right)^{1/p}\notag\\
&=:&  |I|^{\frac md +\frac 12-\frac 1p}\rho_{I,\bfa}^{\gamma-m}\mu_{I,\bfa}. \label{est-hyper}
\end{eqnarray}
Note that in the third step we use that the values  of $\rho_{I,\bfa}$ and ${\rho}_{\bfa}$ are comparable, i.e., $\rho_{I,\bfa}\sim \sup_{x\in Q(I)} \rho_{\bfa}$, since  for  $k\geq 1$ we consider {cuboids} which do not intersect with the boundary. 
On the refinement level $j$, using H\"older's inequality with $\frac{p}{\tau}>1$, we find
\begin{align*}
\sum_{(I,\psi)\in \Lambda^0_j}&|I|^{\left(\frac 1p-\frac 12\right)\tau}|\langle \tilde{u},\tilde{\psi}_I\rangle|^{\tau}\\
&\leq  \sum_{(I,\psi)\in \Lambda_j^0}\left(|I|^{\frac md}\rho_{I,\bfa}^{\gamma-m}\mu_{I,\bfa}\right)^{\tau}\\
&\lesssim  \left(\sum_{(I,\psi)\in \Lambda_j^0}\left(|I|^{\frac md\tau}\rho_{I,\bfa}^{(\gamma-m)\tau}\right)^{\frac{p}{p-\tau}}\right)^{\frac{p-\tau}{p}}\left(\sum_{(I,\psi)\in \Lambda_j^0} \mu_{I,\bfa}^p\right)^{\tau/p}. 
\end{align*}

For the second factor we observe that there is a controlled overlap between the {cuboids} $Q(I)$, meaning each $x\in \Omega$ is contained in a finite number of {cuboids} independent of $x$, such that we get 
\begin{eqnarray*}
\left(\sum_{(I,\psi)\in \Lambda_j^0} \mu_{I,\bfa}^p\right)^{1/p}
&=& \left(\sum_{(I,\psi)\in \Lambda_j^0} {\sum_{i=1}^d}\int_{Q(I)}|\rho_{\bfa}^{m-\gamma}(x){D_i^{m_i}}\tilde{u}(x)|^p\ud x\right)^{1/p}\\
&\lesssim  &  \left({\sum_{i=1}^d}\int_{\Omega}|\rho_{\bfa}^{m-\gamma}(x){D_i^{m_i}}\tilde{u}(x)|^p\ud x\right)^{1/p}
\leq  \|u|\mathcal{K}^{m\bfa}_{p,\gamma}(\Omega)\|. 
\end{eqnarray*}
For the first factor,  by choice or $\rho_{\bfa}$  we always have $\rho_{I,\bfa}\leq  1$, hence the index $k$ is at most $\lambda^j$ for the sets $\Lambda_{j,k}$ to be non-empty. 
The number of elements in $\Lambda_{j,k}$  is bounded by $k^{d-1-\delta}\lambda^{j\delta}$. With this we find 
\begin{align*}
\Bigg(\sum_{(I,\psi)\in \Lambda^0_j}&\left(|I|^{\frac md\tau}\rho_{I,\bfa}^{(\gamma-m)\tau}\right)^{\frac{p}{p-\tau}}\Bigg)^{\frac{p-\tau}{p}}\\
&\leq  \left({\sum_{k=1}^{\lambda^j}\sum_{(I,\psi)\in \Lambda_{j,k}}}\left(\lambda^{-jm\tau}(k\lambda^{-j})^{(\gamma-m)\tau}\right)^{\frac{p}{p-\tau}}\right)^{\frac{p-\tau}{p}}\\
&\leq  \left(\sum_{k=1}^{\lambda^j}\sum_{(I,\psi)\in \Lambda_{j,k}}\left(\lambda^{-j\gamma\tau}k^{(\gamma-m)\tau}\right)^{\frac{p}{p-\tau}}\right)^{\frac{p-\tau}{p}}\\
&\lesssim   \left(
 \lambda^{-j\gamma\frac{p\tau}{p-\tau}}\sum_{k=1}^{\lambda^j}k^{(\gamma-m)\frac{p\tau}{p-\tau}}k^{d-1-\delta} \lambda^{j\delta}
\right)^{\frac{p-\tau}{p}}\\
&\lesssim  \lambda^{-j\gamma\tau} \lambda^{j\delta\frac{p-\tau}{p}}\left(\sum_{k=1}^{\lambda^j}k^{(\gamma-m)\frac{p\tau}{p-\tau}+d-1-\delta} 
\right)^{\frac{p-\tau}{p}}.\\
\end{align*}
Looking at the value of the exponent in the last sum we see that 
\[
(\gamma-m)\frac{p\tau}{p-\tau}+d-1-\delta>-1 \quad \iff \quad \gamma-m+r\frac{d-\delta}{d}>0,
\]
which leads to 
\begin{align}
\Bigg(\sum_{(I,\psi)\in \Lambda^0_j}& \left(|I|^{\frac md\tau}\rho_{I,\bfa}^{(\gamma-m)\tau}\right)^{\frac{p}{p-\tau}}\Bigg)^{\frac{p-\tau}{p}}\notag \\
&\lesssim  \lambda^{-j\gamma\tau}\lambda^{j\delta\frac{p-\tau}{p}}
\begin{cases}
\lambda^{j\left((\gamma-m)\tau+(d-\delta)\frac{p-\tau}{p}\right)}, & \gamma-m+r\frac{d-\delta}{d}>0,\\
(j+1)^{\frac{p-\tau}{p}}, & \gamma-m+r\frac{d-\delta}{d}=0, \label{cases-hyper}\\
1,& \gamma-m+r\frac{d-\delta}{d}<0.
\end{cases}
\end{align}

{\em Step 3.} We now put $\Lambda^0:=\bigcup_{j\geq 0}\Lambda_j^0$. Summing the first line of the last estimate over all $j$, we obtain 
\begin{align*}
\sum_{(I,\psi)\in \Lambda^0}&|I|^{\left(\frac 1p-\frac 12\right)\tau}|\langle \tilde{u}, \tilde{\psi}_I\rangle|^{\tau}\\
&\lesssim \sum_{j=0}^{\infty}\lambda^{-j(m\tau-d\frac{p-\tau}{p})}\|u|\mathcal{K}^{m\bfa}_{p,\gamma}(\Omega)\|^{\tau}\lesssim \|u|\mathcal{K}^{m\bfa}_{p,\gamma}(\Omega)\|^{\tau}<\infty,
\end{align*}
if the geometric series converges, which happens if 
\[
m\tau>d\frac{p-\tau}{p}\quad \iff\quad m>d\frac rd\quad  \iff \quad m>r.
\]
Similarly, in the second case we see that 
\begin{align*}
\sum_{(I,\psi)\in \Lambda^0}&|I|^{\left(\frac 1p-\frac 12\right)\tau}|\langle \tilde{u}, \tilde{\psi}_I\rangle|^{\tau}\\
&\lesssim  \sum_{j=0}^{\infty}\lambda^{-j(\gamma\tau-\delta\frac{p-\tau}{p})}(j+1)^{\frac{p-\tau}{p}}\|u|\mathcal{K}^{m\bfa}_{p,\gamma}(\Omega)\|^{\tau}\lesssim \|u|\mathcal{K}^{m\bfa}_{p,\gamma}(\Omega)\|^{\tau}<\infty,
\end{align*}
where the series converges if
\[
\gamma\tau>\delta \frac{p-\tau}{p},\quad \text{i.e.,} \quad \gamma>\delta \frac rd, \quad \text{i.e.,} \quad m>r\frac{d-\delta}{d}+\frac{\delta}{d}r=r,\quad \text{i.e.,} \quad m>r,
\]
which is the same condition as before. Finally, in the third case we find 
\begin{align*}
\sum_{(I,\psi)\in \Lambda^0}&|I|^{\left(\frac 1p-\frac 12\right)\tau}|\langle \tilde{u}, \tilde{\psi}_I\rangle|^{\tau}\\
&\lesssim  \sum_{j=0}^{\infty}\lambda^{-j(\gamma\tau-\delta \frac{p-\tau}{p})}\|u|\mathcal{K}^{m\bfa}_{p,\gamma}(\Omega)\|^{\tau}\lesssim \|u|\mathcal{K}^{m\bfa}_{p,\gamma}(\Omega)\|^{\tau}<\infty,
\end{align*}
whenever
\[
\gamma\tau>\delta\frac{p-\tau}{p}\quad \iff \quad \gamma>\delta\frac rd
\]
as in the second case above. \\
{\em Step 4.} We need to consider the sets $\Lambda_{j,0}$, i.e., the wavelets close to $M$. Here, we shall make use of the assumption $\tilde{u}\in B^{s\bfa}_{p,p}(\real^d)$. Since the number of elements in  $\Lambda_{j,0}$ is bounded from above by $c\lambda^{j\delta}$  we estimate using H\"older's inequality with $\frac{p}{\tau}>1$ and obtain 
\begin{align*}
\sum_{(I,\psi)\in \Lambda_{j,0}}&|I|^{\left(\frac 1p-\frac 12\right)\tau}|\langle \tilde{u}, \tilde{\psi}_I\rangle|^{\tau}\\
&\lesssim   
\lambda^{j\delta\frac{p-\tau}{p}}\left(\sum_{(I,\psi)\in \Lambda_{j,0}}\lambda^{-jd\left(\frac 1p-\frac 12\right)p}|\langle\tilde{u},\tilde{\psi}_I\rangle|^p\right)^{\tau/p}\\
&= 
\lambda^{j\delta\frac{p-\tau}{p}}\lambda^{-js\tau}\left(\sum_{(I,\psi)\in \Lambda_{j,0}}\lambda^{j\left(s+\frac d2-\frac dp\right)p}|\langle\tilde{u},\tilde{\psi}_I\rangle|^p\right)^{\tau/p}.
\end{align*}
Summing up over $j$ and once more using H\"older's inequality with $\frac{p}{\tau}>1$ gives 
\begin{eqnarray*}
&\ds \sum_{j=0}^{\infty}&\sum_{(I,\psi)\in \Lambda_{j,0}}|I|^{\left(\frac 1p-\frac 12\right)\tau}|\langle \tilde{u}, \tilde{\psi}_I\rangle|^{\tau}\\
&\lesssim  & 
\sum_{j=0}^{\infty}\lambda^{j\delta\frac{p-\tau}{p}}\lambda^{-js\tau}\left(\sum_{(I,\psi)\in \Lambda_{j,0}}\lambda^{j\left(s+\frac d2-\frac dp\right)p}|\langle\tilde{u},\tilde{\psi}_I\rangle|^p\right)^{\tau/p}\\
&\lesssim  & 
\left(\sum_{j=0}^{\infty}\lambda^{j\delta }\lambda^{-js\tau\frac{p}{p-\tau}}\right)^{\frac{p-\tau}{p}}\cdot \left(\sum_{j=0}^{\infty}\sum_{(I,\psi)\in \Lambda_{j,0}}\lambda^{j\left(s+\frac d2-\frac dp\right)p}|\langle\tilde{u},\tilde{\psi}_I\rangle|\right)^{\tau/p}\\
&\lesssim  &\|\tilde{u}|B^{s\bfa}_{p,p}(\real^d)\|^{\tau}\lesssim \|u|B^{s\bfa}_{p,p}(\Omega)\|^{\tau},
\end{eqnarray*}
provided that 
\[
\delta<\frac{sp\tau}{p-\tau}\quad \iff \quad \frac{s}{\delta}>\frac{1}{\tau}-\frac 1p=\frac rd \quad \iff \quad r<\frac{sd}{\delta}.
\]
{\em Step 5.} Finally, we need to consider those $\psi_I$ whose support intersect $\partial \Omega$. In this case we can estimate similar as in Step 4 with $\delta$ replaced by $d-1$. This results in the condition 
\[
\sum_{(I,\psi)\in \Lambda: \ \supp \psi_I\cap \partial \Omega\neq \emptyset}
|I|^{\left(\frac 1p-\frac 12\right)\tau}|\langle \tilde{u},\tilde{\psi}_I\rangle|^{\tau}
\lesssim \|\tilde{u}|B^{s\bfa}_{p,p}(\real^d)\|^{\tau}\lesssim \|u|B^{s\bfa}_{p,p}(\Omega)\|^{\tau}
\]
if $r<\frac{sd}{d-1}$. 
Altogether, we have proved 
\[
\|u|B^{r\bfa}_{\tau,\tau}(\Omega)\|\leq \|\tilde{u}|B^{r\bfa}_{\tau,\tau}(\real^d)\|
\lesssim \|u|B^{s\bfa}_{p,p}(\Omega)\|+\|u|\mathcal{K}^{m\bfa}_{p,\gamma}(\Omega)\|,
\]
with constants independent of $u$. 
 \end{proof}

\begin{remark}{
By a close inspection of  the proof of Theorem \ref{thm:emb-aniso} one sees that we have actually proven for any $u\in \mathcal{K}^{m\bfa}_{p,\gamma}(\Omega)\cap {B}^{s\bfa}_{p,p}(\Omega)$ that  
\begin{equation}\label{emb-inequ}
    \|u\|_{B^{r\bfa}_{\tau,\tau}(\Omega)}\lesssim \max\left\{|u|_{\mathcal{K}^{m \bfa}_{p,\gamma}(\Omega)}, \|u|B^{s\bfa}_{p,p}(\Omega)\|\right\},
\end{equation}
 where 
\begin{equation}\label{kondr-seminorm}
|u|_{\mathcal{K}^{m\bfa}_{p,\gamma}(\Omega)}:=\left({\sum_{i=1}^d}\int_{\Omega}\left|(\rho_{\bfa}(x))^{m-\gamma}D_i^{m_i}{u}(x)\right|^p \ud x\right)^{1/p} 
\end{equation}
denotes the Kondratiev semi-norm, where only the highest derivatives appear. 
}
\end{remark}

\section{Comparison and outlook: Anisotropic  regularity of the heat equation}

As already said before, we wish to study the regularity of parabolic problems (in particular, the heat equation) in anisotropic Besov spaces using the embedding from Theorem \ref{thm:emb-aniso} and compare our results with  \cite[Thms.~2]{AG12}. \\
Therefore, let the domain $\Omega=D\times [0,T]$ be a time-space cylinder, where $D\subset \real^{d+1}$ denotes a bounded Lipschitz domain, $M=\partial_{\mathrm{par}}\Omega
$ be the parabolic boundary which has dimension $\delta=d$, and consider the anisotropy $\bfa$ from \eqref{spec-aniso}, i.e., 
\[
\bfa=\left(a_1,\ldots, a_{d+1}\right)=\frac{d+2}{d} \left(1, \ldots, 1, \frac 12 \right). 
\]
 
Moreover, we denote by $\Theta(\Omega)$ the spaces of all temperatures 
\[
\Theta(\Omega):=\left\{u: \ \frac{\partial u}{\partial t}u=\Delta u \text{ in }\Omega\right\}. 
\]

Then the result from Aimar et al. obtained in \cite[Thms.~2]{AG12} reads as follows: 

\begin{theorem}
Let $1<p<\infty$, $\lambda>0$,  $\alpha>0$ and put $\frac{1}{\tau}=\frac 1p+\frac{\alpha}{d}$. Then 
    \begin{equation}\label{emb-aimar-2}
    \Theta(\Omega)\cap \mathbb{B}^{\lambda}_p(\Omega)\subset \bigcap_{\alpha>\varepsilon>0}\mathbb{B}^{\alpha-\varepsilon}_{\tau}(\Omega),\quad \text{where}\quad \alpha <\min\left(d\Big(1-\frac 1p \Big),\frac{\lambda d}{d-1}\right). 
    \end{equation}
\end{theorem}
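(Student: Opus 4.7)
My plan is to reduce the statement to Theorem~\ref{thm:emb-aniso} by translating everything into the anisotropic Besov scale $B^{\cdot\bfa}_{p,q}$ via~\eqref{coinc-B-spaces}. Setting $\tilde\lambda := \lambda d/(d+2)$ and $\tilde\alpha := \alpha d/(d+2)$, the hypothesis becomes $u \in \Theta(\Omega) \cap B^{\tilde\lambda\bfa}_{p,p}(\Omega)$ and the conclusion becomes $u \in B^{(\tilde\alpha - \tilde\varepsilon)\bfa}_{\tau,\tau}(\Omega)$, with $\bfa$ from~\eqref{spec-aniso} and singular set $M = \partial_{\mathrm{par}}\Omega$ of dimension $\delta = d$ in $\real^{d+1}$. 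The extra ingredient required for Theorem~\ref{thm:emb-aniso} is anisotropic Kondratiev regularity of every temperature.

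\textbf{Kondratiev regularity of temperatures.} I would show that every $u \in \Theta(\Omega)$ lies in $\mathcal{K}^{m\bfa}_{p,\gamma}(\Omega)$ for every $m \in \nat$ and every $\gamma$ below a critical threshold. This rests on classical interior parabolic regularity: on a parabolic ball of radius $\sim \delta(x,t)$, Schauder or $L^p$-estimates give pointwise bounds
\[
|D_i^{m_i} u(x,t)| \;\lesssim\; \delta(x,t)^{-m(d+2)/d}\, \|u\|_{L^p(B_{c\delta(x,t)}(x,t))},
\]
where $\bfm = m\bfa = m\tfrac{d+2}{d}(1,\ldots,1,\tfrac12)$ is chosen so that the spatial derivatives of order $m_i = m(d+2)/d$ and the time derivative of order $m_{d+1} = m(d+2)/(2d)$ both sit at parabolic order $m(d+2)/d$. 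Combined with~\eqref{comp-weights}, $\rho_\bfa \sim \delta^{(d+2)/d}$, this yields $\rho_\bfa^m |D_i^{m_i} u| \lesssim \|u\|_{L^p(\mathrm{nearby})}$, and integrating with the extra weight $\rho_\bfa^{-\gamma}$ over an anisotropic Whitney decomposition of $\Omega$ gives the Kondratiev norm, finite provided $\gamma$ stays below the integrability threshold set by $\mathrm{codim}\,M$ --- a condition that on the $\mathbb{B}$-side turns into $\alpha < d(1-1/p)$.

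\textbf{Assembly and main obstacle.} Once $u \in \mathcal{K}^{m\bfa}_{p,\gamma}(\Omega) \cap B^{\tilde\lambda\bfa}_{p,p}(\Omega)$, Theorem~\ref{thm:emb-aniso}, applied with ambient dimension $d+1$ and singularity dimension $d$, yields $u \in B^{\tilde r\bfa}_{\tau,\tau}(\Omega)$ for every $\tilde r < \min\bigl(m,\, \tilde\lambda(d+1)/d\bigr)$ with $\gamma > (d/(d+1))\tilde r$; letting $m$ grow, the binding constraint is $\tilde r < \tilde\lambda(d+1)/d$, which upon pulling back via~\eqref{coinc-B-spaces} produces the target embedding $u \in \mathbb{B}^{\alpha - \varepsilon}_\tau(\Omega)$ up to the arbitrarily small $\varepsilon$-loss absorbed by the strict inequality in Theorem~\ref{thm:emb-aniso}. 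The hardest step is the quantitative anisotropic Kondratiev estimate: one has to build a parabolic Whitney covering of $\Omega$ adapted to $\bfa$, apply interior parabolic estimates on each piece, and sum in $L^p$ while tracking the anisotropic weight through~\eqref{comp-weights}. A secondary bookkeeping task is reconciling the normalization of $\bfa$ --- so that $\sum 1/a_j$ matches the convention of Theorem~\ref{thm:emb-aniso} --- in order to align the final formula with Aimar's $1/\tau = 1/p + \alpha/d$ and recover the upper bound $\alpha < \lambda d/(d-1)$.
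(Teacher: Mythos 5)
This statement is not proved in the paper at all: it is quoted verbatim from \cite[Thm.~2]{AG12} as the benchmark result, and the paper's own machinery is then used to derive the \emph{different} statement \eqref{emb-aimar-improved}, namely $\Theta(\Omega)\cap\mathbb{B}^{s}_p(\Omega)\subset\mathbb{B}^{\alpha}_{\tau}(\Omega)$ for $\alpha<\min\bigl(2n,s\tfrac{d+1}{d}\bigr)$. Your proposal reconstructs that derivation, not a proof of the quoted theorem. Even if carried out in full, it cannot yield the stated range: Theorem~\ref{thm:emb-aniso} applied in ambient dimension $d+1$ with a codimension-one singular set produces the cutoff $\lambda\tfrac{d+1}{d}$, and since $\tfrac{d+1}{d}<\tfrac{d}{d-1}$ this leaves the range $\lambda\tfrac{d+1}{d}\le\alpha<\lambda\tfrac{d}{d-1}$ of the stated theorem uncovered. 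Reconciling the two constants is not the ``bookkeeping task'' you describe; the original proof in \cite{AG12} runs through interpolation in the parabolic metric on the $d$ spatial variables, which is where $\tfrac{d}{d-1}$ comes from. Likewise the restriction $\alpha<d(1-\tfrac1p)$ does not arise from any integrability threshold of the weight; it is exactly the condition $\tau>1$ needed for the spaces $\mathbb{B}^{\alpha}_{\tau}$ of \cite{AG12} to be defined by interpolation, as the paper itself points out.

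The second, independent gap is in your Kondratiev step. Your interior parabolic estimate $|D_i^{m_i}u|\lesssim\delta^{-m(d+2)/d}\|u\|_{L^p(\mathrm{nearby})}$ gives the weighted bound with weight $\rho_{\bfa}^{m}$, i.e.\ $\gamma=0$; to admit a positive $\gamma$ you must then integrate $\rho_{\bfa}^{-\gamma p}\sim\delta^{-\gamma p(d+2)/d}$ against a codimension-one set, which forces $\gamma<\tfrac{d}{p(d+2)}$. Since Theorem~\ref{thm:emb-aniso} needs $\gamma>\tfrac{\delta}{d+1}r=\tfrac{d}{d+1}r$, this caps $r$ (hence $\alpha$) near zero and the argument collapses. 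What is actually required is $\gamma$ comparable to the Besov smoothness $\lambda$ of the datum, and that is delivered only by the weighted gradient estimate $\|\delta^{2n-\lambda}|(\nabla^{2,1})^n u|\,|L_p(\Omega)\|\lesssim\|u|\mathbb{B}^{\lambda}_p(\Omega)\|$ of \cite[Thms.~4,5]{AG12}, imported in the paper as Corollary~\ref{cor:grad-est}: the gain of $\lambda$ powers of $\delta$ over the naive interior estimate is precisely the nontrivial input, and it cannot be obtained from Schauder or $L^p$ estimates on Whitney cubes using only $\|u\|_{L^p}$. With that corollary in hand your assembly step is essentially the paper's argument for \eqref{emb-aimar-improved}, but without it the proposal does not close, and in either case it does not prove the theorem as stated.
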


{
In particular, this  result was obtained with the help of gradient estimates of temperatures. In this context  we recall \cite[Thm.~5]{AG12}, which will be useful for us in the sequel. We make use of the following notation: we write $\nabla^{2,1}u$ to denote the $(d^2+1)$-vector given by the $d^2$ second-order purely spatial derivatives of $u$ and the first derivative of $u$ w.r.t. time, i.e., $\nabla^{2,1}u=\left(\nabla^2 u, \frac{\partial u}{\partial t}\right)$. By $(\nabla^{2,1})^nu$, $n\in \nat$, we denote the vector of all derivatives, where  each component has the form $\partial^{(\alpha,\alpha_{d+1})}u$ with $|\alpha|+2\alpha_{d+1}=2n$. This way we always have in each one of these derivatives an even number of space derivatives. Moreover, $|(\nabla^{2,1})^nu|$  denotes the Euclidean length of $(\nabla^{2,1})^nu$. Then \cite[Thm.~5]{AG12}, adapted to our situation, reads as follows. 
}

{
\begin{corollary}\label{cor:grad-est} 
Let $\Omega=D\times [0,T]$ with $D\subset \rd$ be a bounded Lipschitz domain, $\lambda>0$, $n\in \nat$,   and $1<p<\infty$.  Then there exists a constant $c$ depending on    $d$, $\lambda$, $p$, and the Lipschitz character of $D$ such  that 
\begin{equation}\label{est-gradient}
\big\|\delta^{2n-\lambda}|(\nabla^{2,1})^{n}  u|  \big|L_p(\Omega)\big\| \leq c \left\|u|L_p([0,T], {B}^{\lambda}_{p,p}(D))\right\| {\leq c'  \|u|\mathbb{B}^{\lambda}_p(\Omega)\|}
\end{equation}
holds for every  temperature $u$ in $\Theta(\Omega)$. 
\end{corollary}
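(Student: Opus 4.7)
The plan is to obtain the first inequality from local regularity of temperatures together with a parabolic Campanato-type characterization of the target Besov norm (thereby extending the first-order estimate of \cite[Thm.~5]{AG12} to arbitrary $n$), and then to deduce the second inequality by a short real-interpolation argument.

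For the first inequality I would start from the mean-value/Bernstein inequality for the heat equation: for every temperature $u$ in a parabolic cylinder $Q_{2r}(x_0,t_0)\subset\Omega$ one has the pointwise bound
\begin{equation*}
|(\nabla^{2,1})^{n}u(x_0,t_0)|^p \;\leq\; \frac{C}{r^{2np+(d+2)}}\int_{Q_{r}(x_0,t_0)}|u(y,s)-P(y,s)|^p\,\ud y\,\ud s
\end{equation*}
for every parabolic polynomial $P$ with $|\alpha|+2\alpha_{d+1}<2n$, since $(\nabla^{2,1})^n$ annihilates all such $P$ and $\nabla^{2,1}u$ is itself a temperature. Choosing $r\sim\delta(x_0,t_0)$, taking the infimum over admissible $P$, multiplying by $\delta^{(2n-\lambda)p}$, and integrating over $\Omega$ along a parabolic Whitney decomposition $\{Q_k\}$ with $|Q_k|^{1/(d+2)}\sim\delta_k$ yields
\begin{equation*}
\int_{\Omega}\delta^{(2n-\lambda)p}|(\nabla^{2,1})^{n}u|^p\,\ud x\,\ud t \;\lesssim\; \sum_{k}\delta_k^{-(d+2)-\lambda p}\,\inf_{P}\int_{Q_k^{*}}|u-P|^p\,\ud y\,\ud s.
\end{equation*}
The right-hand side is equivalent to $\|u\|_{L_p([0,T],B^{\lambda}_{p,p}(D))}^{p}$ by a standard parabolic Campanato characterization, compatible with the anisotropy \eqref{spec-aniso}. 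For $n=1$ this is exactly the reasoning carried out in \cite[Thm.~5]{AG12}; the only change for general $n\in\nat$ is to use polynomial approximation of parabolic order $2n-1$.

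For the second inequality, observe that the inclusions $L_p(\Omega)=L_p([0,T],L_p(D))$ and $W^{2,1}_p(\Omega)\hookrightarrow L_p([0,T],W^{2}_p(D))$ hold by the very definition of $W^{2,1}_p(\Omega)$. Applying the real-interpolation functor $(\cdot,\cdot)_{\lambda/2,p}$ to both pairs and using that for $1\leq p<\infty$ real interpolation commutes with $L_p([0,T],\cdot)$ in the target variable, one obtains
\begin{equation*}
\mathbb{B}^{\lambda}_{p}(\Omega) \;\hookrightarrow\; \bigl(L_p([0,T],L_p(D)),\,L_p([0,T],W^{2}_p(D))\bigr)_{\lambda/2,p} \;=\; L_p([0,T],B^{\lambda}_{p,p}(D)),
\end{equation*}
which is precisely the desired second inequality.

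The main obstacle is the higher-order pointwise estimate in the first step: the case $n=1$ is classical and already recorded in \cite[Thm.~5]{AG12}, but for general $n$ one must verify carefully that $(\nabla^{2,1})^{n}$ kills exactly the space of parabolic polynomials of combined degree strictly less than $2n$ and that the accompanying Campanato characterization of $L_p([0,T],B^{\lambda}_{p,p}(D))$ accommodates polynomial subtraction of that order. With those two ingredients in place, the weighted summation over the Whitney family is routine.
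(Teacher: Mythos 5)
Your proposal sets out to reprove the weighted gradient estimate from scratch, whereas the paper obtains Corollary \ref{cor:grad-est} essentially by citation: the purely spatial higher-order estimate $\|\delta^{2n-\lambda}|\nabla^{2n}u|\,|L_p(\Omega)\|\lesssim \|u|L_p([0,T],B^{\lambda}_{p,p}(D))\|$ is \cite[Thms.~4,5]{AG12}, and the mixed space--time derivatives are then absorbed via \cite[Lem.~5.3]{AGI08}, namely the observation that for a temperature $\partial_t^{k}\partial_x^{\alpha}u=\Delta^{k}\partial_x^{\alpha}u$, so that $(\nabla^{2,1})^{n}u$ lies in the linear span of the purely spatial derivatives $\nabla^{2n}u$. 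Your treatment of the second inequality, $\mathbb{B}^{\lambda}_{p}(\Omega)\hookrightarrow L_p([0,T],B^{\lambda}_{p,p}(D))$ via $\bigl(L_p(T;X_0),L_p(T;X_1)\bigr)_{\theta,p}=L_p\bigl(T;(X_0,X_1)_{\theta,p}\bigr)$, is sound for $0<\lambda<2$ (for larger $\lambda$ one must interpolate against $W^{2k,k}_p$, consistent with the paper's remark that $\mathbb{B}^{s}_p$ is defined via higher derivatives for general $s$).

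The first step, however, contains two genuine gaps. First, the pointwise bound $|(\nabla^{2,1})^{n}u(x_0,t_0)|^{p}\lesssim r^{-2np-(d+2)}\int_{Q_r}|u-P|^{p}$ is invoked for an \emph{arbitrary} parabolic polynomial $P$ of parabolic degree $<2n$; but interior estimates of this type apply to temperatures, and $u-P$ is caloric only when $P$ is. For non-caloric $P$ the difference solves $\partial_t v-\Delta v=-(\partial_t-\Delta)P\neq 0$, so the inequality as stated is unjustified: you must either restrict the infimum to caloric polynomials (and then show this smaller class still yields the Campanato-type control you need) or carry the polynomial source term through a Schauder-type estimate. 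Second, and more seriously, the asserted equivalence of $\sum_k\delta_k^{-(d+2)-\lambda p}\inf_P\int_{Q_k^{*}}|u-P|^{p}$ with $\|u\|^{p}_{L_p([0,T],B^{\lambda}_{p,p}(D))}$ is false for general functions: the left-hand side measures oscillation in time as well as in space, while the right-hand side carries no smoothness in $t$ at all. The one-sided bound $\lesssim$ can only hold because $u$ is a temperature, so that time regularity is inherited from spatial regularity through the equation; establishing precisely this implication is the substance of \cite[Thms.~4,5]{AG12} and of the parabolic mean-value and maximal-function machinery of \cite{AGI08}, and it cannot be dispatched as a ``standard parabolic Campanato characterization''. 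As written, your argument assumes the hard part of the result it is meant to prove.
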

\remark{Corollary \ref{cor:grad-est} in its above version is a consequence of \cite[Thms.~4,5]{AG12}  together with \cite[Lem.~5.3]{AGI08}, where the latter is the observation that the derivative $(\nabla^{2,1})^{n}u$ belongs to the linear  span of $\nabla^{2n}u$ . 
}
}



We can reinterpret the estimate \eqref{est-gradient}  in terms of anisotropic Kondratiev regularity for the homogeneous heat equation as follows:  
The left hand side in \eqref{est-gradient} can be expressed via the Kondratiev semi-norm \eqref{kondr-seminorm}, since  using \eqref{comp-weights} we see that for $\bfm=(2n,\ldots, 2n,n)=2n\frac{d}{d+2}\bfa=:m\bfa$ and $s:=\frac{d+2}{d}\gamma=\lambda$  we have 
\begin{align*}
    |u|_{\mathcal{K}^{\bfm}_{p,\gamma}(\Omega)}& \sim {\sum_{i=1}^d}\left\|(\rho_{\bfa})^{m-\gamma}{D^{m_i}_i}{u}\big| L_p(\Omega)\right\|  \\
     & \sim {\sum_{i=1}^d}\left\|{\left(\delta^{\frac{d+2}{d}}\right)}^{m-\gamma}{D^{m_i}_i}{u}\big| L_p(\Omega)\right\| \\ 
    & = {\sum_{i=1}^d}\left\|\delta^{2n-s}{D^{m_i}_i}{u}\big| L_p(\Omega)\right\|\lesssim \left\|\delta^{2n-\lambda}|(\nabla^{2,1})^n u|\big|L_p(\Omega)\right\|.  
\end{align*}
Thus, a combination of Theorem \ref{thm:emb-aniso}, Corollary \ref{cor:grad-est}, and the observation that $B^{\tilde{s}\bfa}_{p,p}(\Omega)=\mathbb{B}^s_p(\Omega)$ for $\tilde{s}={s}\frac{d}{d+2}$   
yields  for a temperature $u\in \Theta(\Omega)$ : 
\begin{align}
    \|u|B^{r\bfa}_{\tau,\tau}(\Omega)\|\lesssim \max\left\{|u|_{\mathcal{K}^{\bfm}_{p,\tilde{s}}(\Omega)}, \|u|B^{\tilde{s}\bfa}_{p,p}(\Omega)\|\right\} \lesssim \|u|B^{\tilde{s}\bfa}_{p,p}(\Omega)\|
\end{align}
subject to the restriction 
\[
0<r<\min\left(2n \frac{d}{d+2}, \tilde{s}\frac{d+1}{d}\right). 
\]
In good agreement with \eqref{coinc-B-spaces} we put  $B^{r\bfa}_{\tau,\tau}(\Omega):=\mathbb{B}^{\alpha}_{\tau}(\Omega)$ for $\alpha=r\frac{d+2}{d}$ (i.e.,  the space $\mathbb{B}^{\alpha}_{\tau}(\Omega)$ with $\tau<1$ has to be understood -- in a slight abuse of notation --  according to Definition \ref{def-B-aniso}) and we obtain: 
\begin{equation}\label{emb-aimar-improved}
    \Theta(\Omega)\cap \mathbb{B}^{s}_p(\Omega)\subset \mathbb{B}^{\alpha}_{\tau}(\Omega),\quad \text{where}\quad \alpha <\min\left(2n,s\frac{d+1}{d}\right). 
    \end{equation}
 Comparing \eqref{emb-aimar-improved} with \eqref{emb-aimar-2} we conclude that the restriction on the smoothness parameter $\alpha$ has improved significantly: Since $\Omega\subset \real^{d+1}$ by replacing $d$ by $d+1$ our  approach  gives a much better upper bound for $\alpha$ \Big($s\frac{d+1}{d}$ instead of $s\frac{d}{d-1}$\Big).   
Moreover, the restriction  $\alpha<d\Big(1-\frac 1p \Big)$ resulting from the fact that the spaces $\mathbb{B}^{\lambda}_{p}(\Omega)$ defined in \cite{AG12} only make sense for $p>1$ can be completely removed. \\
In particlar, invoking   \cite[Thm.~6.2]{Wo07} we deduce  that 
\[
\Theta(\Omega)\subset 
   W^{2\ldots, 2,1}(\Omega)= B^{\frac{2d}{d+2}\bfa}_{2,2}(\Omega)=\mathbb{B}^2_2(\Omega). 
\]
i.e., \eqref{emb-aimar-improved} yields for parameters  $p=2$ and  $s<2$ {(assuming $2n$ large)} that 
\[
\Theta(\Omega)\subset \mathbb{B}^{\alpha}_{\tau}(\Omega), \qquad \text{where}\quad \alpha <\frac 83 \ (d=3)\quad \text{and}\quad \alpha <{3} \ (d=2). 
\]
On the other hand \eqref{emb-aimar-2} only yields $\alpha<\frac 32 \ (d=3)$ and $\alpha<1\ (d=2)$.

\begin{remark}{
Let us note that compared to \cite{AG12} our approach is more flexible: It  allows us to treat more general parabolic equations (also with inhomogeneous initial boundary data) as long as one has regularity results for the solution of the parabolic problem in anisotropic Kondratiev spaces. In this context we mention \cite{KR12} for first results in this direction. \\
Moreover, we think that even better results can be achieved if  one investigates regularity in anisotropic Kondratiev and Besov spaces which have different integrability w.r.t. the spacial  and time variable.  
This interesting problem will be studied a future paper. \\
Finally, we remark that it is not completely clear that the anisotropic Kondratiev and Besov spaces we are dealing with in this paper are the optimal spaces for  studying parabolic PDEs. Another possibility would be to have a look at the regularity of the solutions to evolution equations  in Besov spaces of dominating mixed smoothness  type  or even Banach-valued Besov spaces. 
}
\end{remark}

\end{document}